\documentclass[reqno,a4paper,12pt]{article}%
\usepackage{amsmath}
\usepackage{amsfonts}%
\usepackage{amssymb}%
\usepackage{graphicx}
\usepackage[a4paper,includeheadfoot,margin=2.54cm]{geometry}
\providecommand{\U}[1]{\protect\rule{.1in}{.1in}}
\newtheorem{theorem}{Theorem}

\newtheorem{lemma}[theorem]{Lemma}

\newenvironment{proof}[1][Proof]{\noindent\textbf{#1.} }{\ \rule{0.5em}{0.5em}}
\begin{document}

\title{On the wave equation with logarithmic damping: wellposedness, blow-up and
numerical analysis}
\author{Salim Messaoudi$^{1}$, Mohammad Kafini$^{2}$ and Mostafa Zahri$^{3}$\\$^{1,3}$\small Department of Mathematics, \\ \small University of Sharjah, Sharjah 27272 UAE\\$^{2}$\small Department of Mathematics, The Interdisciplinary Research \\ \small Center for Construction and Building Materials, \\\small King Fahd University of Petroleum and Minerals, \\\small Dhahran 31261, Saudi Arabia\\$^{1}$\small E-mail: smessaoudi@sharjah.ac.ae\\$^{2}$\small E-mail: mkafini@kfupm.edu.sa\\$^{3}$\small E-mail: mzahri@sharjah.ac.ae}
\date{}
\maketitle

\begin{abstract}
In this work, we are concern with the wave equation subjected to a nonlinear
feedback of logarithmic type and nonlinear polynomial source. To achieve a
comprehensive understanding of this novel damping mechanism, we start with
studying the wellposedness and the uniqueness of the problem. Then we
establish the blow-up result of the problem under suitable initial data with
negative initial energy and critical exponent of the source term. Finally, we
present a brief numerical study and examples that support our result.

\textbf{Keywords} \textbf{and phrases: }Wellposedness, Logarithmic feedback,
Blow-up, Nonlinear source.

\textbf{AMS Classification : }35B40, 35B44, 35L05, 35L15, 35L70.

\end{abstract}

\section{Introduction}

In this work, we are concerned with the following initial-boundary-value
problem with logarithmic damping and nonlinear source%
\begin{equation}
\left\{
\begin{tabular}
[c]{ll}%
$u_{tt}-\Delta u+u_{t}\ln\left(  1+u_{t}^{2}\right)  =\left\vert u\right\vert
^{p-2}u,$ & $\Omega\times\left(  0,\infty\right)  ,$\\
$u(x,t)=0$ & $\partial\Omega,$\\
$u(x,0)=u_{0}\left(  x\right)  ,$ \ $u_{t}(x,0)=u_{1}\left(  x\right)  ,$ &
$\Omega,$%
\end{tabular}
\right.  \ \label{1}%
\end{equation}
where $\Omega$ is a bounded domain in $\mathbb{R}^{N}$, with a smooth boundary
$\partial\Omega,u_{0},u_{1}$ are given initial data in suitable spaces and $p>2$ is a constant satisfying some conditions to be specified later. Such equations arise in nonlinear dissipative systems where the damping grows faster than linear but slower than any superlinear.

The study of nonlinear wave equations plays a fundamental role in
understanding many physical systems, including elasticity, fluid mechanics,
and wave propagation in nonlinear media. A central problem in this theory is
to determine whether solutions exist globally in time or develop singularities
(blow-up) in finite time. In particular, the presence of damping and source
terms leads to a rich interplay between dissipative and energy-generating
mechanisms, which strongly influences the qualitative behavior of solutions.

The interaction between damping and source terms has been extensively studied
over the past decades. A pioneering contribution is due to Levine
$\cite{1},\cite{2}$, who first analyzed this interaction for wave equations
with linear damping and nonlinear source, using the concavity method. He
showed that solutions with negative initial energy blow up in finite time.
This approach was later extended by Georgiev and Todorova $\cite{3}$, who
generalized the results to nonlinear damping and identified conditions on the
exponents governing global existence versus blow-up. Subsequent works further
refined these results using energy methods, potential well theory, and
differential inequalities, establishing sharp thresholds depending on the
balance between damping strength and source growth. See $\cite{4}$%
-$\cite{10},$ for more results.

In the other hand, logarithmic nonlinearities have attracted growing interest
due to their appearance in various physical models, such as quantum mechanics,
control theory, etc ... . Wave equations with logarithmic source terms exhibit
distinctive analytical difficulties because of their slow growth and
non-polynomial structure. Recent works have investigated blow-up phenomena for
such equations, often combining energy techniques with Nehari manifold
arguments or Lyapunov functionals to establish finite-time blow-up under
appropriate conditions. See $\cite{11}$-$\cite{14}.$

More recently, attention has shifted toward nonstandard damping mechanisms,
such as fractional and logarithmic damping, which arise in models with memory
effects and complex dissipative structures. In this regard, Donghao Li et al
$\cite{15}$ studied the equation%
\[
u_{tt}-\Delta u+u_{t}\ln\left(  1+u_{t}^{2}\right)  =0
\]
and obtained a polynomial decay of the energy by employing the multipliers
technique. In $\cite{16},\cite{17},$ the authors incorporated the logarithmic
damping to different systems such as a one-dimensional Timoshenko system or a
piezoelectric beams system with magnetic effects and obtained some decay results.

These types of damping are typically weaker than the classical ones and, thus,
provide only limited energy dissipation. As a result, the competition between
them (logarithmic dampings) and source terms becomes more delicate. For this
reason, it is desirable to investigate the competition between the nonlinear
source and a logarithmic damping of the form mentioned in $\left(
\ref{1}\right)  $.

Let's mention that, t\textbf{o the best of our knowledge}, there is only one study by
Donghao Li et al. $\cite{18}$ about the blow up of the wave equation in the
presence of a nonlinear source term and a logarithmic damping of the form
$sgn\left(  u_{t}\right)  \ln\left(  1+\left\vert u_{t}\right\vert \right)  $.
In that work, the authors established the global existence of solutions by
using of Faedo-Galerkin combined with the stable set method. 
They also obtained the exponential decay of energy for solutions with certain
initial data. Furthermore, the blow-up in finite time under some conditions on
initial data was given.

The aim of this work is to establish, in details, the local well-posedness of
problem $\left(  \ref{1}\right)  $, using the Faedo-Galerkin method  and then
show that the solutions blow up under certain conditions on the initial data
and the exponent $p$. In addition, to illustrate our findings, we present a
brief numerical study and examples.

This paper consists of four sections. After the introduction, Section 2 presents the well-posedness in details. In section 3, we establish the blow up of the solution and in Section 4, we numerically illustrate the blow-up results of the energy functional. 

\section{Wellposedness}

Before we start, we would like to note here that we will use the following two
inequalities from $\cite{15}$%
\begin{align}
c_{p}\left\vert s\right\vert ^{1+p}  & \leq\left\vert s\right\vert \ln\left(
1+\left\vert s\right\vert \right)  \leq c_{q}\left\vert s\right\vert
^{q},\text{ \ for \ }\left\vert s\right\vert \leq1,\forall p\geq1,\text{
}0<q\leq2,\label{2}\\
c_{k}\left\vert s\right\vert ^{k}  & \leq\left\vert s\right\vert \ln\left(
1+\left\vert s\right\vert \right)  \leq c_{\rho}\left\vert s\right\vert
^{1+\rho},\text{ \ for \ }\left\vert s\right\vert \geq1,\forall\rho>0,\text{
}0<k\leq1.\nonumber
\end{align}
To establish the well-posedness of problem $\left(  \ref{1}\right)$ , we
first consider the following auxiliary problem%
\begin{equation}%
\begin{tabular}
[c]{ll}%
$u_{tt}-\Delta u+u_{t}\ln\left(  1+u_{t}^{2}\right)  =f\left(  x,t\right)  ,$
& in $\Omega\times\left(  0,\infty\right)  ,$\\
$u(x,t)=0,$ & on $\partial\Omega,$\\
$u(x,0)=u_{0}\left(  x\right)  ,$ \ $u_{t}(x,0)=u_{1}\left(  x\right)  ,$ & in
$\Omega,$%
\end{tabular}
\label{PF}%
\end{equation}
where $u_{0}$, $u_{1}$ and $f$ are given functions. We have the following theorem.

\begin{theorem}
\emph{Let the initial data }$(u_{0},u_{1})\in(H^{2}(\Omega)\cap H_{0}%
^{1}(\Omega))\times H_{0}^{1}(\Omega)$\emph{\ and suppose that }$f\in
L^{2}(0,T;H_{0}^{1}(\Omega)).$\emph{\ Then, problem }$\left(  \ref{PF}\right)
$\emph{\ has a unique strong solution satisfying}%
\begin{align*}
u  & \in L^{\infty}(0,T;H^{2}(\Omega)\cap H_{0}^{1}(\Omega)),\\
u_{t}  & \in L^{\infty}(0,T;H_{0}^{1}(\Omega)),\\
u_{tt}  & \in L^{\infty}(0,T;L^{2}(\Omega)).
\end{align*}
\end{theorem}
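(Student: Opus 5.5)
We argue by the Faedo--Galerkin method with the basis of eigenfunctions of the Dirichlet Laplacian. Let $\{w_j\}$ be the eigenfunctions of $-\Delta$ in $H_0^1(\Omega)$, orthonormal in $L^2(\Omega)$. This basis is also orthogonal in $H_0^1$ and in $H^2\cap H_0^1$. Set $V_m=\mathrm{span}\{w_1,\dots,w_m\}$ and look for
\[
u^m(t)=\sum_{j=1}^m g_{jm}(t)w_j
\]
solving
\[
(u^m_{tt},w)+(\nabla u^m,\nabla w)+(u^m_t\ln(1+(u^m_t)^2),w)=(f,w),\qquad w\in V_m .
\]
The initial data $u^m(0)$ and $u^m_t(0)$ are the orthogonal projections of $u_0$ and $u_1$ onto $V_m$, which converge in $H^2\cap H_0^1$ and $H_0^1$ respectively. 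The map $s\mapsto s\ln(1+s^2)$ is $C^1$, so the resulting ODE system has a local solution on some $[0,t_m)$. The a priori estimates below extend it to $[0,T]$.

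\textbf{First estimate.} Taking $w=u^m_t$ gives
\[
\tfrac12\tfrac{d}{dt}\big(\|u^m_t\|^2+\|\nabla u^m\|^2\big)+\int_\Omega (u^m_t)^2\ln(1+(u^m_t)^2)=(f,u^m_t).
\]
The damping integral is nonnegative. Young's inequality and Gronwall's lemma then bound $u^m_t$ in $L^\infty(0,T;L^2)$ and $u^m$ in $L^\infty(0,T;H_0^1)$.

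\textbf{Second estimate.} Take $w=-\Delta u^m_t\in V_m$, which is admissible by the choice of basis. Write $g(s)=s\ln(1+s^2)$. Integrating by parts, using $u^m_t=0$ on $\partial\Omega$,
\[
(g(u^m_t),-\Delta u^m_t)=\int_\Omega g'(u^m_t)|\nabla u^m_t|^2\ge0,
\]
since $g'(s)=\ln(1+s^2)+\frac{2s^2}{1+s^2}\ge0$. Similarly $(f,-\Delta u^m_t)=(\nabla f,\nabla u^m_t)$, which is where $f\in L^2(0,T;H_0^1)$ is needed. Gronwall then bounds $\|\nabla u^m_t\|$ and $\|\Delta u^m\|$ uniformly in $m$.

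\textbf{Third estimate.} To bound $u^m_{tt}$, we could differentiate in time, but $f_t$ is not assumed. Instead we use the equation directly. Since $(u^m_{tt},w)=(\Delta u^m-g(u^m_t)+f,w)$ for $w\in V_m$, the projection gives
\[
\|u^m_{tt}\|\le\|\Delta u^m\|+\|g(u^m_t)\|+\|f\|.
\]
Here $\|\Delta u^m\|$ is already bounded. To bound $\|g(u^m_t)\|$, use \eqref{2}: it gives $|g(s)|\le C(|s|+|s|^{1+\rho})$ for arbitrarily small $\rho>0$, roughly since $\ln(1+s^2)\le 2\ln(1+|s|)+\ln 2$. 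With $\rho$ small, the Sobolev embedding $H_0^1\hookrightarrow L^{2(1+\rho)}$ holds (any $\rho$ if $N\le2$, $\rho\le 2/(N-2)$ otherwise). So $\|g(u^m_t)\|$ is controlled by $\|\nabla u^m_t\|$.

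\textbf{Main obstacle.} The estimate above only puts $u_{tt}$ in $L^2(0,T;L^2)$, not $L^\infty$, because $f$ is merely $L^2$ in time. To reach $L^\infty$ I would either read the hypothesis as giving $f\in L^\infty(0,T;L^2)$, or control $\|u_{tt}(0)\|$ via $u_0\in H^2$ and differentiate the equation in time, assuming extra regularity $f_t\in L^2(0,T;L^2)$. I expect this to be the delicate point in matching the stated regularity.

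\textbf{Passage to the limit.} Banach--Alaoglu gives weak-* convergent subsequences. By Aubin--Lions, $u^m_t\to u_t$ strongly in $L^2(0,T;L^2)$ and a.e. Since $g(u^m_t)$ is bounded in $L^2(Q)$ and converges a.e., it converges weakly to $g(u_t)$, by the Lions lemma. This identifies the limit as a strong solution satisfying the initial conditions.

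\textbf{Uniqueness.} Let $w=u-v$ be the difference of two solutions and test the difference equation with $w_t$. The map $g$ is nondecreasing, so
\[
(g(u_t)-g(v_t))(u_t-v_t)\ge0.
\]
Hence
\[
\tfrac{d}{dt}\big(\|w_t\|^2+\|\nabla w\|^2\big)\le0,
\]
and $w=0$ because $w$ has zero initial data.
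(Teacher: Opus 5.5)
Your argument is the paper's own. It uses Galerkin approximation on Dirichlet eigenfunctions and the multipliers $u^m_t$ and $-\Delta u^m_t$, with $g'\ge 0$ and $f\in H^1_0$ used exactly as you use them. It bounds $u^m_t\ln(1+(u^m_t)^2)$ in $L^2$ via (\ref{2}) and $H^1_0(\Omega)\hookrightarrow L^{2(1+\rho)}(\Omega)$, then applies Aubin--Lions plus Lions' lemma for the damping term and monotonicity of $g$ for uniqueness. Using Gronwall instead of the paper's absorption with $\varepsilon T=\frac14$, and your projection bound instead of testing with $u^m_{tt}$, are cosmetic differences.

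The obstacle you single out is genuine, and the paper does not overcome it either. Its inequality (\ref{6}) carries $\int_\Omega|f(t)|^2\,dx$ on the right. For $f\in L^2(0,T;H^1_0(\Omega))$ this is only integrable in $t$, so (\ref{9}) is justified only as $\int_0^T\|u^m_{tt}\|_2^2\,dt\le C$. That is, incidentally, all the later Aubin--Lions step uses.

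No cleverer estimate can fix this, because the conclusion is false as stated. Any $u$ in the stated class has $\Delta u\in L^\infty(0,T;L^2(\Omega))$. By the growth bound on $g$ and the Sobolev embedding you already use, it also has $g(u_t)\in L^\infty(0,T;L^2(\Omega))$. So $u_{tt}\in L^\infty(0,T;L^2(\Omega))$ would force $f=u_{tt}-\Delta u+g(u_t)\in L^\infty(0,T;L^2(\Omega))$. This fails e.g.\ for $f(x,t)=|t-t_0|^{-1/4}\phi(x)$ with $0\neq\phi\in H^1_0(\Omega)$ and $t_0\in(0,T)$.

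The correct repair is your first option, in one of two forms:
\begin{itemize}
\item weaken the conclusion to $u_{tt}\in L^2(0,T;L^2(\Omega))$; or
\item add the hypothesis $f\in L^\infty(0,T;L^2(\Omega))$. Then your bound $\|u^m_{tt}\|_2\le\|\Delta u^m\|_2+\|g(u^m_t)\|_2+\|f\|_2$ is uniform in $t$ and gives the stated regularity without differentiating in time. Your second option assumes more than needed.
\end{itemize}
The extra hypothesis costs nothing where the theorem is applied. In the fixed-point argument for (\ref{1}), $f=|w|^{p-2}w$ with $w\in L^\infty(0,T;H^2(\Omega))$. Then $\|f\|_2=\|w\|_{2(p-1)}^{p-1}$ is bounded in $t$, since $H^2(\Omega)\hookrightarrow L^{2(p-1)}(\Omega)$ under (\ref{14}).
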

\noindent\textbf{Proof.} We use the standard Faedo-Galerkin method. Let
$\{w_{j}\}_{j=1}^{\infty}$ be an orthogonal basis of $H_{0}^{1}(\Omega)$
satisfying%
\[
-\Delta w_{j}=\lambda_{j}w_{j},\qquad\text{ }j=1,2,3,...
\]
Define the finite-dimensional space%
\[
V_{m}=\text{span}\{w_{1},\dots,w_{m}\}
\]
and look for approximate solutions of the form%
\[
u^{m}(x,t)=\sum_{j=1}^{m}a_{j}^{m}(t)w_{j}(x),
\]
in $V_{m}$, for the following approximate problem:%
\begin{equation}
\left\{
\begin{tabular}
[c]{l}%
$\int_{\Omega}\left[  u_{tt}^{m}w+\nabla u^{m}\cdot\nabla w+u_{t}^{m}%
\ln(1+(u_{t}^{m})^{2})w\right]  dx=\int_{\Omega}fwdx,\text{ }\forall w\in
V_{m}$\\
$u^{m}(0)=u_{0}^{m},\ u_{t}^{m}(0)=u_{1}^{m},$%
\end{tabular}
\right. \label{3}%
\end{equation}
where
\begin{equation}
u_{0}^{m}\longrightarrow u_{0}\text{ in }H^{2}(\Omega)\cap H_{0}^{1}%
(\Omega)\text{ \ and \ }u_{1}^{m}\longrightarrow u_{1}\text{ in }H_{0}%
^{1}(\Omega)\text{,\ }\label{31}%
\end{equation}
as $m\longrightarrow\infty.$ This is a system of ordinary differential
equations for the unknown functions $h_{j}^{m}\left(  t\right)  .$ Classical
ODE theory guarantees the existence of $C^{2}$- functions%
\[
h_{j}^{m}:[0,t_{m})\longrightarrow\mathbb{R},\text{ \ }j=1,2,...,m,\text{
}0<t_{m}\leq T.
\]
By replacing $w$ by $u_{t}^{m}$ in $\left(  \ref{3}\right)  $, we obtain
\[
\frac{d}{dt}\frac{1}{2}\int_{\Omega}\left[  (u_{t}^{m})^{2}+|\nabla u^{m}%
|^{2}\right]  \,dx+\int_{\Omega}(u_{t}^{m})^{2}\ln(1+(u_{t}^{m})^{2}%
)\,dx=\int_{\Omega}f\,u_{t}^{m}\,dx.
\]
Integration over $(0,t)$ yields
\begin{align*}
& \frac{1}{2}\Vert u_{t}^{m}\Vert_{2}^{2}+\frac{1}{2}\Vert\nabla u^{m}%
\Vert_{2}^{2}+\int_{0}^{t}\int_{\Omega}(u_{t}^{m})^{2}\ln(1+(u_{t}^{m}%
)^{2})\,dx\,ds\\
& =\int_{0}^{t}\int_{\Omega}f\,u_{t}^{m}\,dx\,ds+\frac{1}{2}\Vert u_{1}%
^{m}\Vert_{2}^{2}+\frac{1}{2}\Vert\nabla u_{0}^{m}\Vert_{2}^{2}.
\end{align*}
Young's inequality and convergence $\left(\ref{31}\right)$ imply
\[
\frac{1}{2}\sup_{0\leq t\leq t_{m}}\left[  \Vert\nabla u^{m}\Vert_{2}%
^{2}+\Vert u_{t}^{m}\Vert_{2}^{2}\right]  +\int_{0}^{t_{m}}\int_{\Omega}%
(u_{t}^{m})^{2}\ln(1+(u_{t}^{m})^{2})\,dx\,dt
\]%
\[
\leq\frac{1}{2}\Vert u_{1}\Vert_{2}^{2}+\frac{1}{2}\Vert\nabla u_{0}\Vert
_{2}^{2}+\varepsilon\int_{0}^{t_{m}}\Vert u_{t}^{m}\Vert_{2}^{2}\,dt+\frac
{1}{4\varepsilon}\int_{0}^{t_{m}}\int_{\Omega}f^{2}\,dx\,dt
\]%
\[
\leq\frac{1}{2}\Vert u_{1}\Vert_{2}^{2}+\frac{1}{2}\Vert\nabla u_{0}\Vert
_{2}^{2}+\varepsilon T\sup_{0\leq t\leq t_{m}}\Vert u_{t}^{m}\Vert_{2}%
^{2}+\frac{1}{4\varepsilon}\int_{0}^{T}\int_{\Omega}f^{2}\,dx\,dt.
\]
We then choose $\varepsilon$ so small that $\varepsilon T=\frac{1}{4}$.
Consequently, we obtain for some $C_{0}>0,$
\begin{align}
& \sup_{0\leq t\leq t_{m}}\Vert u_{t}^{m}\Vert_{2}^{2}+\sup_{0\leq t\leq
t_{m}}\Vert\nabla u^{m}\Vert_{2}^{2}+\int_{0}^{t_{m}}\int_{\Omega}(u_{t}%
^{m})^{2}\ln(1+(u_{t}^{m})^{2})\,dx\,dt\nonumber\\
& \leq\Vert u_{1}\Vert_{2}^{2}+\Vert\nabla u_{0}\Vert_{2}^{2}+\int_{0}^{T}%
\int_{\Omega}f^{2}\,dx\,dt=:C_{0}.\label{4}%
\end{align}
Next, we substitute $w=-\Delta u_{t}^{m}$ in $\left(  \ref{3}\right)  $ to
get
\[
\frac{1}{2}\frac{d}{dt}\left(  \Vert\nabla u_{t}^{m}\Vert_{2}^{2}+\Vert\Delta
u^{m}\Vert_{2}^{2}\right)  -\int_{\Omega}\Delta u_{t}^{m}\,u_{t}^{m}%
\ln(1+(u_{t}^{m})^{2})\,dx=-\int_{\Omega}\Delta u_{t}^{m}\,f\,dx,
\]
which gives
\begin{align*}
& \frac{1}{2}\frac{d}{dt}\left(  \Vert\nabla u_{t}^{m}\Vert_{2}^{2}%
+\Vert\Delta u^{m}\Vert_{2}^{2}\right)  +\int_{\Omega}|\nabla u_{t}^{m}%
|^{2}\ln(1+(u_{t}^{m})^{2})\,dx+\int_{\Omega}\frac{2|\nabla u_{t}^{m}%
|^{2}(u_{t}^{m})^{2}}{1+(u_{t}^{m})^{2}}\,dx\\
& =\int_{\Omega}\nabla u_{t}^{m}\cdot\nabla f\,dx.
\end{align*}
Integrating over $(0,t)$, using the convergence $\left(  \ref{31}\right)  ,$
leads to
\[
\frac{1}{2}\Vert\nabla u_{t}^{m}\Vert_{2}^{2}+\frac{1}{2}\Vert\Delta
u^{m}\Vert_{2}^{2}+\int_{0}^{t}\int_{\Omega}|\nabla u_{t}^{m}|^{2}\ln
(1+(u_{t}^{m})^{2})\,dx\,ds+\int_{0}^{t}\int_{\Omega}\frac{2|\nabla u_{t}%
^{m}|^{2}(u_{t}^{m})^{2}}{1+(u_{t}^{m})^{2}}\,dx\,ds
\]%
\[
\leq\int_{0}^{t}\int_{\Omega}\nabla u_{t}^{m}\cdot\nabla f\,dx\,ds+\frac{1}%
{2}\Vert\nabla u_{1}\Vert_{2}^{2}+\frac{1}{2}\Vert\Delta u_{0}\Vert_{2}^{2}%
\]%
\[
\leq\frac{1}{2}\Vert\nabla u_{1}\Vert_{2}^{2}+\frac{1}{2}\Vert\Delta
u_{0}\Vert_{2}^{2}+\varepsilon\int_{0}^{t}\int_{\Omega}|\nabla u_{t}^{m}%
|^{2}\,dx\,ds+\frac{1}{4\varepsilon}\int_{0}^{t}\int_{\Omega}|\nabla
f|^{2}\,dx\,ds
\]%
\[
\leq\frac{1}{2}\Vert\nabla u_{1}\Vert_{2}^{2}+\frac{1}{2}\Vert\Delta
u_{0}\Vert_{2}^{2}+\varepsilon T\sup_{0\leq t\leq t_{m}}\Vert\nabla u_{t}%
^{m}\Vert_{2}^{2}+\frac{1}{4\varepsilon}\int_{0}^{T}\int_{\Omega}|\nabla
f|^{2}\,dx\,ds.
\]
By choosing $\varepsilon$ such that $\varepsilon T=\frac{1}{4}$, then,
similarly to $\left(  \ref{4}\right)  $, we get for some $C_{1}>0,$%
\begin{equation}
\sup_{0\leq t\leq t_{m}}\Vert\nabla u_{t}^{m}\Vert_{2}^{2}+\sup_{0\leq t\leq
t_{m}}\Vert\Delta u^{m}\Vert_{2}^{2}+\int_{0}^{t_{m}}|\nabla u_{t}^{m}|^{2}%
\ln(1+(u_{t}^{m})^{2})\,\,dt\leq C_{1}.\label{5}%
\end{equation}
We also substitute $w=u_{tt}^{m}$ in $\left(  \ref{3}\right)  $. Thus, we
have
\[
\int_{\Omega}|u_{tt}^{m}|^{2}\,dx=\int_{\Omega}u_{tt}^{m}\,\Delta
u^{m}\,dx-\int_{\Omega}u_{tt}^{m}\,u_{t}^{m}\ln(1+(u_{t}^{m})^{2}%
)\,dx+\int_{\Omega}f\,u_{tt}^{m}\,dx
\]%
\begin{align}
& \leq\frac{1}{4}\int_{\Omega}|u_{tt}^{m}|^{2}\,dx+\int_{\Omega}(\Delta
u^{m})^{2}\,dx+\frac{1}{4}\int_{\Omega}(u_{tt}^{m})^{2}\,dx\nonumber\\
& +\int_{\Omega}|f|^{2}\,dx+\int_{\Omega}|u_{t}^{m}|^{2}\ln^{2}(1+(u_{t}%
^{m})^{2})\,dx.\label{6}%
\end{align}
Now, we show that the last term of $\left(  \ref{6}\right)  $ is bounded.  For
this, we consider the following partition
\[
\Omega^{+}=\{x\in\Omega:\ |u_{t}^{m}(x,t)|\geq1\},\qquad\Omega^{-}%
=\{x\in\Omega:\ |u_{t}^{m}(x,t)|<1\}.
\]
On $\Omega^{-}$, we have
\begin{equation}
\int_{\Omega^{-}}|u_{t}^{m}|^{2}\ln^{2}(1+(u_{t}^{m})^{2})\,dx\leq
|\Omega|\,\ln^{2}2<+\infty.\label{7}%
\end{equation}
By the algebraic relation $\left(  \ref{2}\right)  $ with $s=u_{t}^{m}$ and
the choice of $\rho>0$ such that
\[
2(1+\rho)\leq\frac{2N}{N-2},\quad\text{if }N\geq3,\qquad\rho>0,\ \text{if
}N=1,2
\]
and using the Sobolev embedding, we obtain by $\left(\ref{5}\right)  $:
\begin{equation}
\int_{\Omega^{+}}|u_{t}^{m}|^{2}\ln^2(1+(u_{t}^{m})^{2})\,dx\leq C\int%
_{\Omega^{+}}|u_{t}^{m}|^{2(1+\rho)}\,dx\leq C\Vert\nabla u_{t}^{m}\Vert
_{2}^{\,\rho+1}<+\infty.\label{8}%
\end{equation}
Combining $\left(  \ref{7}\right)  $,$\left(  \ref{8}\right)  $, we arrive at%
\[
\int_{\Omega}|u_{t}^{m}|^{2}\ln^{2}(1+(u_{t}^{m})^{2})\,dx<\infty.
\]
Therefore, $\left(  \ref{6}\right)  $-$\left(  \ref{8}\right)  $ yield%
\begin{equation}
\Vert u_{tt}^{m}\Vert_{2}^{2}\leq C_{2}.\label{9}%
\end{equation}
Putting $\left(  \ref{4}\right)  $, $\left(  \ref{5}\right)  $, $\left(
\ref{9}\right)  $ together, we conclude that the sequence $(u^{m})$ of
approximate solutions is uniformly bounded in $m$ and $t$. Therefore, we can
extend $t_{m}$ to $T$. Moreover, we can extract a subsequence (still denoted
by $(u^{m})$) such that, for some $u$, we have
\begin{equation}
\left\{
\begin{array}
[c]{c}%
u^{m}\rightharpoonup u\text{ weekly * in }L^{\infty}(0,T;H^{2}(\Omega)\cap
H_{0}^{1}(\Omega))\\
u_{t}^{m}\rightharpoonup u\text{ weekly * in }L^{\infty}(0,T;H_{0}^{1}%
(\Omega))\text{ \ \ \ \ \ \ \ \ \ \ \ \ }\\
u_{tt}^{m}\rightharpoonup u\text{ weekly * in }L^{\infty}(0,T;L^{2}%
(\Omega))\text{ \ \ \ \ \ \ \ \ \ \ \ \ }\\
u^{m}\rightharpoonup u\text{ weekly in }L^{2}(0,T;H^{2}(\Omega)\cap H_{0}%
^{1}(\Omega))\text{ \ }\\
u_{t}^{m}\rightharpoonup u\text{ weekly in }L^{2}(0,T;H_{0}^{1}(\Omega))\text{
\ \ \ \ \ \ \ \ \ \ \ \ \ \ }\\
u_{tt}^{m}\rightharpoonup u\text{ weekly in }L^{2}(0,T;L^{2}(\Omega)).\text{
\ \ \ \ \ \ \ \ \ \ \ \ \ \ }%
\end{array}
\right. \label{10}%
\end{equation}
By Aubin--Lions's theorem, this latter convergence, together with the compact
embedding of $H_{0}^{1}(\Omega)\hookrightarrow L^{2}(\Omega)$, we can extract
a further subsequence (still denoted by $(u^{m})$) such that
\[
u_{t}^{m}\rightarrow u_{t}\quad\text{strongly in }L^{2}(\Omega\times(0,T)),
\]%
\[
u_{t}^{m}\rightarrow u_{t}\quad\text{a.e. in }\Omega\times(0,T).\text{
\ \ \ \ \ \ \ \ \ \ \ \ }%
\]
Using the continuity of the mappings $g(s)=s\ln(1+s^{2})$, we deduce that
\begin{equation}
u_{t}^{m}\ln(1+(u_{t}^{m})^{2})\rightarrow u_{t}\ln(1+u_{t}^{2})\quad
\text{a.e. in }\Omega\times(0,T).\label{11}%
\end{equation}

On the other hand, by $\left(  \ref{7}\right)  $ and $\left(  \ref{8}\right)
$, we have
\[
\int_{\Omega}\left(  u_{t}^{m}\ln(1+(u_{t}^{m})^{2})\right)  ^{2}%
\,dx\leq\widetilde{C},
\]
for some constant $\widetilde{C}>0$. Consequently,
\[
\int_{0}^{T}\int_{\Omega}\left(  u_{t}^{m}\ln(1+(u_{t}^{m})^{2})\right)
^{2}\,dx\,dt\leq\widetilde{C}\,T<+\infty.
\]
Therefore, the subsequence $\left(  u_{t}^{m}\ln(1+(u_{t}^{m})^{2})\right)  $
is bounded in $L^{2}(\Omega\times(0,T))$. Combining this result with $\left(
\ref{11}\right)  $ and lemma of Lions $\cite{19}$, we infer
\begin{equation}
u_{t}^{m}\ln(1+(u_{t}^{m})^{2})\rightharpoonup u_{t}\ln(1+u_{t}^{2}%
)\quad\text{weakly in }L^{2}(\Omega\times(0,T)).\label{12}%
\end{equation}
Let's integrate $\left(  \ref{3}\right)  $ over $(0,t)$, $t<T$, to get
\begin{align*}
& \int_{\Omega}u_{t}^{m}w-\int_{\Omega}u_{1}^{m}w+\int_{0}^{t}\int_{\Omega
}\nabla u^{m}\cdot\nabla w+\int_{0}^{t}\int_{\Omega}u_{t}^{m}\ln(1+(u_{t}%
^{m})^{2})w\\
& =\int_{0}^{t}\int_{\Omega}fw,\text{ \ \ \ }\forall w\in V_{m}.
\end{align*}
Keeping in mind that $(u^{m})$ is the subsequence used in $\left(
\ref{9}\right)  $, the convergence $\left(  \ref{10}\right)  $-$\left(
\ref{12}\right)  $ will allow us to pass to the limit, as $m\rightarrow
\infty,$ and arrive at
\begin{equation}
\int_{\Omega}u_{t}w-\int_{\Omega}u_{1}w+\int_{0}^{t}\int_{\Omega}\nabla
u^{m}\cdot\nabla w+\int_{0}^{t}\int_{\Omega}u_{t}\ln(1+u_{t}^{2})\,w=\int%
_{0}^{t}\int_{\Omega}fw,\text{ \ }\forall w\in V_{m}.\label{13}%
\end{equation}
As $(V_{m})$ is dense in $H_{0}^{1}(\Omega)$, the result $\left(
\ref{13}\right)  $ remains valid for all $w\in H_{0}^{1}(\Omega)$.

By differentiating $\left(  \ref{13}\right)  $ with respect to $t$, we obtain
\[
\int_{\Omega}u_{tt}w+\int_{\Omega}\nabla u\cdot\nabla w+\int_{\Omega}u_{t}%
\ln(1+u_{t}^{2})\,w=\int_{\Omega}fw,
\]
for all $w\in H_{0}^{1}(\Omega)$ and for a.e. $t\in(0,T)$. Then we conclude,
by use of Green's formula, that
\[
u_{tt}-\Delta u+u_{t}\ln(1+u_{t}^{2})=f\quad\text{in }L^{2}(\Omega
\times(0,T)),
\]
which solves the equation in $\left(  \ref{PF}\right)  $.

To show uniqueness and that the solution satisfies the initial data, we
proceed in a standard way as in $\cite{16}$. This completes the proof of
Theorem 1. $\
\endproof
$

\begin{lemma}
For any $u\in L^{\infty}(0,T;H^{2}(\Omega)\cap H_{0}^{1}(\Omega))$, we have
\[
|u|^{p-2}u\in L^{2}(0,T;H_{0}^{1}(\Omega)),
\]
provided that
\begin{equation}
p\leq\frac{N}{N-4}+1,\qquad\text{if }N>4.\label{14}%
\end{equation}

\end{lemma}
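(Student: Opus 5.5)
We have to show that $|u|^{p-2}u$ and its gradient are in $L^2$ in space, with a bound uniform in $t$; integrability in time then comes for free. Set $g(s)=|s|^{p-2}s$. Since $p>2$, $g\in C^1(\mathbb{R})$ with $g'(s)=(p-1)|s|^{p-2}$. For $u(t)\in H^2(\Omega)\cap H_0^1(\Omega)$, the chain rule for Sobolev functions (valid because $g$ is $C^1$, $g(0)=0$, and $g'$ grows polynomially) gives $g(u)=0$ on $\partial\Omega$ and $\nabla g(u)=(p-1)|u|^{p-2}\nabla u$. Hence $g(u(t))\in H_0^1(\Omega)$ as soon as the right-hand sides below are finite. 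It is then enough to show
\[
\|g(u(t))\|_{H_0^1}^2\le C\,\|u(t)\|_{H^2}^{2(p-1)}
\]
for a.e. $t$. Integrating over $(0,T)$ gives $\int_0^T\|g(u)\|_{H_0^1}^2\,dt\le CT\|u\|_{L^\infty(0,T;H^2)}^{2(p-1)}<\infty$. Measurability in $t$ follows because $g$ induces a continuous map between the relevant spaces.

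For the bound, note first that $\|g(u)\|_2^2=\|u\|_{2(p-1)}^{2(p-1)}$, which is controlled by the gradient term via Poincar\'e once $g(u)\in H_0^1$. So the key term is
\[
\int_\Omega|u|^{2(p-2)}|\nabla u|^2\,dx .
\]
We split into cases on $N$. If $N\le3$, then $H^2(\Omega)\hookrightarrow L^\infty(\Omega)$, so the integral is at most $\|u\|_\infty^{2(p-2)}\|\nabla u\|_2^2\le C\|u\|_{H^2}^{2(p-1)}$. If $N=4$, then $H^2\hookrightarrow L^q$ for every $q<\infty$ and $H^1\hookrightarrow L^r$ for all $r<\infty$. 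H\"older's inequality with exponents $2$ and $2$ gives the bound $\|u\|_{4(p-2)}^{2(p-2)}\|\nabla u\|_4^2\le C\|u\|_{H^2}^{2(p-1)}$, using $\nabla u\in H^1\hookrightarrow L^4$. The same argument works for $N=4$ and any $p$.

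For $N>4$ we use $H^2\hookrightarrow L^{2N/(N-4)}$ for $u$ and $H^1\hookrightarrow L^{2N/(N-2)}$ for $\nabla u$. Apply H\"older with exponents $\frac{N}{2}$ and $\frac{N}{N-2}$:
\[
\int_\Omega|u|^{2(p-2)}|\nabla u|^2\le\|u\|_{N(p-2)}^{2(p-2)}\|\nabla u\|_{2N/(N-2)}^2 .
\]
The second factor is bounded by $C\|u\|_{H^2}^2$. The first requires $N(p-2)\le\frac{2N}{N-4}$, that is, $p\le 2+\frac{2}{N-4}=\frac{2N-6}{N-4}$. We must compare this with the stated condition (\ref{14}), $p\le\frac{N}{N-4}+1=\frac{2N-4}{N-4}$, which is weaker. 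This is the main obstacle: the simple H\"older split is not sharp enough, and the exponents must be redistributed.

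The first thing to try is this. The requirement $2(p-1)\le\frac{2N}{N-4}$ is exactly what makes $\|g(u)\|_2$ finite. For the gradient, choose H\"older exponents $a,b$ with $\frac1a+\frac1b=1$, and require $2(p-2)a\le\frac{2N}{N-4}$ and $2b\le\frac{2N}{N-2}$. Feasibility means
\[
\frac{(p-2)(N-4)}{N}+\frac{N-2}{N}\le1,
\]
that is, $p\le 2+\frac{2}{N-4}$, again the stricter bound. So under the paper's exponent (\ref{14}) the gradient estimate may only close in $L^1$, not $L^2$. In writing the proof I would therefore check carefully whether the intended condition is $2(p-1)\le 2N/(N-4)$, which gives $g(u)\in L^2$, or the gradient condition. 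If only (\ref{14}) is available, I would either adopt the sharper restriction $p\le\frac{2N-6}{N-4}$ for $N>4$, or settle for $g(u)\in L^2(0,T;L^2)$ together with $g(u)\in L^2(0,T;W_0^{1,s})$ for the appropriate $s<2$. I would note this discrepancy explicitly rather than hide it.
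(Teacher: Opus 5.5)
Your analysis is correct, and the obstruction you found does not come from how you split H\"older's inequality. For $N\ge5$ the lemma is false whenever $\frac{2N-6}{N-4}<p\le\frac{2N-4}{N-4}$, so no argument can close under (\ref{14}) alone. To see this, take $0\in\Omega$, a cutoff $\phi\in C_c^\infty(\Omega)$ with $\phi\equiv1$ near $0$, and the time-independent function $u(x)=|x|^{-\beta}\phi(x)$. Near the origin $|D^2u|\sim|x|^{-\beta-2}$, so $u\in H^2(\Omega)\cap H_0^1(\Omega)$ as soon as $\beta<\frac{N-4}{2}$. On the other hand, $|\nabla(|u|^{p-2}u)|\sim|x|^{-(p-1)\beta-1}$ is square integrable only if $(p-1)\beta<\frac{N-2}{2}$. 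If $p-1>\frac{N-2}{N-4}$ you can choose $\frac{N-2}{2(p-1)}\le\beta<\frac{N-4}{2}$, and then $|u|^{p-2}u\notin H_0^1(\Omega)$. A concrete case is $N=5$, $p=5$ (allowed by (\ref{14}), which permits $p\le6$) and $\beta=0.45$. So your threshold $p\le\frac{2N-6}{N-4}=\frac{N-2}{N-4}+1$ is sharp. Your argument proves the corrected lemma completely: the $L^\infty$ bound for $N\le3$, H\"older with exponents $2,2$ for $N=4$, and H\"older with exponents $\frac N2,\frac N{N-2}$ plus the two critical Sobolev embeddings for $N\ge5$.

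The paper's proof reaches (\ref{14}) only through an invalid step. It bounds $(p-1)\int_\Omega|u|^{2(p-2)}|\nabla u|^2$ by $\frac{p-1}{2}\int_\Omega|\nabla u|^2+\frac{p-2}{2}\int_\Omega|u|^{2(p-2)}$, a product by a sum, and then applies only $H^2\hookrightarrow L^{2N/(N-4)}$ to $u$. Replacing $u$ by $\lambda u$ shows this fails for large $\lambda$: the left side scales like $\lambda^{2(p-1)}$ and the right side like $\lambda^2+\lambda^{2(p-2)}$. (The constant in front should also be $(p-1)^2$.) Condition (\ref{14}) is exactly $2(p-1)\le\frac{2N}{N-4}$, which, as you suspected, is the condition for $|u|^{p-2}u\in L^2(\Omega)$, not for its gradient.

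Of your two remedies, only restricting $p$ fits how the lemma is used. Theorem 1, as stated and proved, needs $f\in L^2(0,T;H_0^1(\Omega))$, because the estimate (\ref{5}) rests on the term $\int_\Omega\nabla u_t^m\cdot\nabla f$. Knowing only $|u|^{p-2}u\in L^2(0,T;W_0^{1,s})$ with $s<2$ would therefore not feed the fixed-point argument of Theorem 3 unless Theorem 1 is reworked. Accordingly, the exponent ranges in Theorems 3 and 6 should also be restricted to $p\le\frac{2N-6}{N-4}$ when $N\ge5$.
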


\noindent\textbf{Proof. }We estimate%

\[
\int_{\Omega}\left\vert \nabla\left(  |u|^{p-2}u\right)  \right\vert
^{2}=(p-1)\int_{\Omega}\left(  |u|^{p-2}|\nabla u|\right)  ^{2}%
\]%
\[
\leq\frac{p-1}{2}\int_{\Omega}|\nabla u|^{2}+\frac{p-2}{2}\int_{\Omega
}|u|^{2(p-2)}.
\]
By using the embedding
\[
H^{2}(\Omega)\hookrightarrow L^{q}(\Omega),\qquad q\leq\frac{2N}%
{N-4},\ \text{if }N>4,
\]
and $q\geq1$, if $N=1,2,3,4$, we conclude that
\[
\int_{\Omega}\left\vert \nabla\left(  |u|^{p-2}u\right)  \right\vert ^{2}%
\leq\frac{p-1}{2}\int_{\Omega}|\nabla u|^{2}+\Vert u\Vert_{H^{2}}%
^{\,2(p-2)}<+\infty.
\]
Therefore $|u|^{p-2}u\in L^{2}(0,T;H_{0}^{1}(\Omega))$. \ $\
\endproof
$

Now, we are ready to establish the local existence for problem $\left(
\ref{1}\right)  .$

\begin{theorem}
Let $(u_{0},u_{1})\in(H^{2}(\Omega)\cap H_{0}^{1}(\Omega))\times H_{0}%
^{1}(\Omega)$. Suppose that $p$ satisfies $\left(  \ref{14}\right)  $. Then
$\left(  \ref{1}\right)  $ has a unique local solution
\[
u\in L^{\infty}((0,T);H^{2}\cap H_{0}^{1}(\Omega)),\qquad u_{t}\in L^{\infty
}((0,T);H_{0}^{1}(\Omega)),
\]%
\[
u_{tt}\in L^{\infty}((0,T);L^{2}(\Omega)).
\]

\end{theorem}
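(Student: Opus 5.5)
We prove Theorem 3 by a contraction mapping argument built on Theorem 1 and Lemma 2. Fix $T>0$ (to be chosen small) and $R>0$ large, say
\[
R^{2}\geq 2\left(\Vert\nabla u_{1}\Vert_{2}^{2}+\Vert\Delta u_{0}\Vert_{2}^{2}\right)+2,
\]
and consider the closed set
\[
X_{T}=\left\{ v\in L^{\infty}(0,T;H^{2}(\Omega)\cap H_{0}^{1}(\Omega)),\ v_{t}\in L^{\infty}(0,T;H_{0}^{1}(\Omega)):\ v(0)=u_{0},\ v_{t}(0)=u_{1},\ \sup_{[0,T]}\left(\Vert\Delta v\Vert_{2}^{2}+\Vert\nabla v_{t}\Vert_{2}^{2}\right)\leq R^{2}\right\},
\]
equipped with the weaker metric
\[
d(v,w)=\sup_{[0,T]}\left(\Vert\nabla(v-w)\Vert_{2}+\Vert v_{t}-w_{t}\Vert_{2}\right).
\]
The set $X_{T}$ is complete for this metric: a $d$-Cauchy sequence in $X_{T}$ has a limit in the energy space, and the uniform $H^{2}\times H_{0}^{1}$ bound passes to the limit by weak-$*$ lower semicontinuity. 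For $v\in X_{T}$, Lemma 2 (this is where (\ref{14}) is used) gives $f=|v|^{p-2}v\in L^{2}(0,T;H_{0}^{1}(\Omega))$. Theorem 1 then yields a unique strong solution $u=\Phi(v)$ of (\ref{PF}) with this $f$. A fixed point of $\Phi$ solves (\ref{1}).

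\textbf{Step 1: $\Phi$ maps $X_{T}$ into itself.} We repeat the second energy estimate from the proof of Theorem 1, now keeping the explicit dependence on $T$. Testing with $-\Delta u_{t}$, dropping the nonnegative damping terms and using Young's inequality gives
\[
\Vert\nabla u_{t}\Vert_{2}^{2}+\Vert\Delta u\Vert_{2}^{2}\leq\Vert\nabla u_{1}\Vert_{2}^{2}+\Vert\Delta u_{0}\Vert_{2}^{2}+\int_{0}^{t}\Vert\nabla u_{t}\Vert_{2}^{2}\,ds+\int_{0}^{T}\Vert\nabla f\Vert_{2}^{2}\,ds.
\]
Gronwall's inequality then bounds the left side by $e^{T}\left(\Vert\nabla u_{1}\Vert_{2}^{2}+\Vert\Delta u_{0}\Vert_{2}^{2}+\int_{0}^{T}\Vert\nabla f\Vert_{2}^{2}\right)$. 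For the last integral, we use the computation in the proof of Lemma 2 more carefully, namely $|\nabla f|\leq(p-1)|v|^{p-2}|\nabla v|$. Hölder's inequality together with the embeddings $H^{2}\hookrightarrow L^{q}$ and $H^{2}\hookrightarrow W^{1,2N/(N-2)}$ gives $\Vert\nabla f\Vert_{2}\leq C\Vert\Delta v\Vert_{2}^{p-1}\leq CR^{p-1}$, so $\int_{0}^{T}\Vert\nabla f\Vert_{2}^{2}\leq C T R^{2(p-1)}$. Choosing $T$ small makes the bound at most $R^{2}$.

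\textbf{Step 2: $\Phi$ is a contraction in $d$.} Let $u=\Phi(v)$ and $\bar u=\Phi(\bar v)$, and set $w=u-\bar u$. Subtracting the two equations and testing with $w_{t}$, the damping difference contributes
\[
\int_{\Omega}\left(g(u_{t})-g(\bar u_{t})\right)(u_{t}-\bar u_{t})\,dx\geq0,\qquad g(s)=s\ln(1+s^{2}),
\]
because $g$ is nondecreasing (indeed $g'(s)=\ln(1+s^{2})+2s^{2}/(1+s^{2})\geq0$). This term can therefore be dropped, and
\[
\Vert w_{t}\Vert_{2}^{2}+\Vert\nabla w\Vert_{2}^{2}\leq2\int_{0}^{t}\int_{\Omega}\left(|v|^{p-2}v-|\bar v|^{p-2}\bar v\right)w_{t}.
\]
The pointwise bound $\left||v|^{p-2}v-|\bar v|^{p-2}\bar v\right|\leq C(|v|^{p-2}+|\bar v|^{p-2})|v-\bar v|$, combined with Hölder's inequality, the $L^{\infty}(H^{2})$ bounds (so that $v,\bar v$ are in $L^{q}$ with $q$ large, or in $L^{\infty}$ when $N\leq3$) and $\Vert v-\bar v\Vert_{2N/(N-2)}\leq C\Vert\nabla(v-\bar v)\Vert_{2}$, gives $d(u,\bar u)\leq C(R)\sqrt{T}\,d(v,\bar v)$. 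For small $T$, Banach's fixed point theorem yields a unique fixed point $u$, which has the stated regularity.

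Uniqueness in the full class follows from the same difference estimate with $v=u$, combined with Gronwall's inequality.

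The main obstacle is the exponent bookkeeping in Steps 1 and 2 for $N>4$. The Hölder splitting $|v|^{p-2}|\nabla v|$ requires $2(p-2)\cdot\frac{N}{2}\leq\frac{2N}{N-4}$, i.e. $p-2\leq\frac{2}{N-4}$. This is somewhat stronger than (\ref{14}), which only says $p-2\leq\frac{4}{N-4}$. If this gap cannot be closed by choosing a better Hölder pairing, we will follow Lemma 2's cruder Young-type bound exactly as stated there rather than the sharp product estimate.
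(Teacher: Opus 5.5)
Your overall scheme is sound and is actually more careful than the paper's. You use a set with uniform $H^2\times H_0^1$ bounds, an energy-level metric, $\Phi$ defined through Lemma 2 and Theorem 1, the $-\Delta u_t$ multiplier for the self-map, and monotonicity of $g$ for the contraction. The paper's set $X_T^m$ bounds only energy-level quantities and checks the self-map with the first energy identity. It therefore never controls $\|\Delta w\|_2$ uniformly, which is needed both to make the set complete and to feed Lemma 2 into Theorem 1 with constants independent of $w$. For $N\le 4$, and for $N>4$ with $p-2\le\frac{2}{N-4}$, your Steps 1 and 2 go through as written.

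The gap is exactly the one you flagged, and your fallback does not close it. For $N>4$ and $\frac{2}{N-4}<p-2\le\frac{4}{N-4}$, which (\ref{14}) allows, no better H\"older pairing exists, because the estimate itself is false. Let $\phi$ be a cutoff equal to $1$ near an interior point $x_0$, and set $v=|x-x_0|^{-\beta}\phi$ with $\frac{N-2}{2(p-1)}\le\beta<\frac{N-4}{2}$; this range is nonempty precisely when $p-2>\frac{2}{N-4}$. Then $v\in H^2(\Omega)\cap H_0^1(\Omega)$, while near $x_0$ we have $|\nabla(|v|^{p-2}v)|\sim|x-x_0|^{-\beta(p-1)-1}\notin L^2$. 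So $f=|v|^{p-2}v$ need not lie in $L^2(0,T;H_0^1(\Omega))$, and $\Phi$ is not even defined via Theorem 1. Condition (\ref{14}) is the condition for $|v|^{p-2}v\in L^2$, not for $|v|^{p-2}v\in H^1$.

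Falling back on Lemma 2 ``exactly as stated'' is not an option either. Its inequality $(p-1)\int|u|^{2(p-2)}|\nabla u|^2\le\frac{p-1}{2}\int|\nabla u|^2+\frac{p-2}{2}\int|u|^{2(p-2)}$ bounds a product by a sum and is not an instance of Young's inequality, which would produce $|\nabla u|^4$ and $|u|^{4(p-2)}$. The example above shows its conclusion fails in this range. The paper's proof relies on Lemma 2 at exactly this point, so following the paper does not rescue this case.

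The same threshold reappears in your Step 2. There you need $(|v|^{p-2}+|\bar v|^{p-2})|v-\bar v|\in L^2$ with $v-\bar v$ controlled only in $L^{2N/(N-2)}$ and $v,\bar v$ in $L^{2N/(N-4)}$, which requires $(N-4)(p-2)\le 2$. What your argument genuinely proves is the theorem with (\ref{14}) replaced, for $N>4$, by $p\le 2+\frac{2}{N-4}=\frac{2N-6}{N-4}$. You should state that restriction explicitly instead of appealing to Lemma 2.
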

\textbf{Proof. }Let's consider the following set
\[
X_{T}^{m}=\left\{  w\in L^{\infty}(0,T;H^{2}(\Omega)\cap H_{0}^{1}%
(\Omega))\;;\;w_{t}\in L^{\infty}(0,T;H_{0}^{1}(\Omega)),\;w_{tt}\in
L^{\infty}(0,T;L^{2}(\Omega)),\right.
\]%
\[
\left.  w(\cdot,0)=u_{0},\quad w_{t}(\cdot,0)=u_{1},\quad\sup_{0\leq t\leq
T}\left[  \Vert w_{t}(\cdot,t)\Vert_{H_{0}^{1}}^{2}+\Vert w_{t}(\cdot
,t)\Vert_{2}^{2}\right]  \leq M^{2}\right\}  .
\]
$X_{T}^{m}$ is nonempty for $M$ large enough. See trace theorem $\cite{20}.$

We equip $X_{T}^{m}$ by the complete metric given by
\[
d^{2}(w,\widetilde{w})=\sup_{0\leq t\leq T}\left\{  \Vert w_{t}(\cdot
,t)-\widetilde{w}_{t}(\cdot,t)\Vert_{H_{0}^{1}}+\Vert w(\cdot,t)-\widetilde{w}%
_{t}(\cdot,t)\Vert_{2}^{2}\right\}  .
\]
Now, we define the map $\varphi$ on $X_{T}^{m}$ as follows: for $w\in
X_{T}^{m}$, we solve the linear problem
\begin{equation}%
\begin{cases}
u_{tt}-\Delta u+u_{t}\ln(1+u_{t}^{2})=|w|^{p-2}w, & \text{in }\Omega
\times(0,T),\\
u=0, & \text{on }\partial\Omega,\ t\geq0,\\
u(\cdot,0)=u_{0},\qquad u_{t}(\cdot,0)=u_{1}. &
\end{cases}
\label{15}%
\end{equation}
It follows, by Theorem 1, that $\left(  \ref{15}\right)  $ has a unique
solution
\[
u\in L^{\infty}(0,T;H^{2}(\Omega)\cap H_{0}^{1}(\Omega)),\text{ \ }u_{t}\in
L^{\infty}(0,T;H_{0}^{1}(\Omega)),\text{ \ }u_{tt}\in L^{\infty}%
(0,T;L^{2}(\Omega)).
\]
In addition, we have
\begin{align*}
& \frac{1}{2}\int_{\Omega}\left(  u_{t}^{2}+|\nabla u|^{2}\right)
\,dx+\int_{0}^{t}\int_{\Omega}|u_{t}|^{2}\ln(1+u_{t}^{2})\,dx\,ds\\
& =\frac{1}{2}\int_{\Omega}\left(  u_{1}^{2}+|\nabla u_{0}|^{2}\right)
\,dx+\int_{0}^{t}\int_{\Omega}u_{t}|w|^{p-2}w\,dx\,ds
\end{align*}%
\[
\leq\frac{1}{2}\int_{\Omega}\left(  u_{1}^{2}+|\nabla u_{0}|^{2}\right)
\,dx+\frac{1}{2}T\sup_{0<t<T}\int_{\Omega}u_{t}^{2}\,dx+\frac{1}{2}%
T\sup_{0<t<T}\int_{\Omega}|w|^{2(p-1)}\,dx\,ds
\]%
\[
\leq\frac{1}{2}\int_{\Omega}\left(  u_{1}^{2}+|\nabla u_{0}|^{2}\right)
\,dx+\frac{1}{2}T\sup_{0<t<T}\int_{\Omega}u_{t}^{2}\,dx+\frac{1}{2}C_{e}%
T\sup_{0<t<T}\Vert\nabla u\Vert_{2}^{2(p-1)},
\]
where $C_{e}$ is the embedding constant.

Thus, we obtain, for some $C^{\ast}>0$,
\[
(1-T)\sup_{0<t<T}\int_{\Omega}u_{t}^{2}+(1-C_{e}T\,M^{2(p-2)})\sup_{0<t<T}%
\int_{\Omega}|\nabla u|^{2}\leq C^{\ast}\int_{\Omega}\left(  u_{1}^{2}+|\nabla
u_{0}|^{2}\right)  .
\]
Let us choose $M$ such that
\[
M^{2}>2C^{\ast}\int_{\Omega}\left(  u_{1}^{2}+|\nabla u_{0}|^{2}\right)  ,
\]
then pick $T>0$ so small that
\begin{equation}
1-T\geq\frac{1}{2}\qquad\text{and}\qquad1-C_{e}T\,M^{2(p-2)}\geq\frac{1}%
{2}.\label{16}%
\end{equation}
Hence, we get
\[
\sup_{0<t<T}\int_{\Omega}u_{t}^{2}+\sup_{0<t<T}\int_{\Omega}|\nabla u|^{2}%
\leq2C^{\ast}\int_{\Omega}\left(  u_{1}^{2}+|\nabla u_{0}|^{2}\right)  \leq
M^{2}.
\]
Therefore, $u\in X_{T}^{m}$.

Hence, with the above choice of $M$ and $T$, $\varphi$ maps $X_{T}^{m}$ to
$X_{T}^{m}$. Next, we show that, for $T$ even smaller if necessary, $\varphi$
is a contraction. To this end, let
\[
u=\varphi(w)\text{ \ \ and \ \ }\widetilde{u}=\varphi(\widetilde{w}%
),\qquad\text{\ for \ \ }w,\widetilde{w}\in X_{T}^{m}.
\]
It is very easy to see that $v=u-\widetilde{u}$ solves
\begin{equation}%
\begin{cases}
v_{tt}-\Delta v+u_{t}\ln(1+u_{t}^{2})-\widetilde{u}_{t}\ln(1+\widetilde{u}%
_{t}^{\,2})=|w|^{p-2}w-|\widetilde{w}|^{p-2}\widetilde{w},\\
v=0\quad\text{on }\partial\Omega,\\
v(\cdot,0)=v_{t}(\cdot,0)=0.
\end{cases}
\label{17}%
\end{equation}
We multiply equation in $\left(  \ref{17}\right)  $ by $v_{t}$ and integrate
over $\Omega\times(0,t)$, $t<T$, to get
\[
\frac{1}{2}\int_{\Omega}\left(  v_{t}^{2}+|\nabla v|^{2}\right)  +\int_{0}%
^{t}\int_{\Omega}\left[  u_{t}\ln(1+u_{t}^{2})-\widetilde{u}_{t}%
\ln(1+\widetilde{u}_{t}^{\,2})\right]  (u_{t}-\widetilde{u}_{t})
\]%
\[
=\int_{0}^{t}\int_{\Omega}\left(  |w|^{p-2}w-|\widetilde{w}|^{p-2}%
\widetilde{w}\,\right)  v_{t}.
\]
By using the monotonicity of the function $g(s)=s\ln(1+s^{2})$, we obtain
\[
\sup_{0<t<T}\left[  \Vert v(\cdot,t)\Vert_{H_{0}^{1}}^{2}+\Vert v_{t}%
(\cdot,t)\Vert_{2}^{2}\right]  \leq T\,\sup_{0<t<T}\int_{\Omega}v_{t}^{2}%
+\int_{0}^{T}\int_{\Omega}\left(  |w|^{p-2}w-|\widetilde{w}|^{p-2}%
\widetilde{w}\right)  ^{2}.
\]
Recalling that $1-T\geq\frac{1}{2}$ and by the intermediate value theorem, we
have
\begin{equation}
d^{2}(u,\widetilde{u})\leq\lambda\int_{0}^{T}\int_{\Omega}|\theta
w+(1-\theta)\widetilde{w}|^{2(p-2)}\,|w-\widetilde{w}|^{2}\,dx\,dt,\text{ for
some }\lambda>0,\label{18}%
\end{equation}
where $0<\theta<1$. By using the embedding $H_{0}^{1}(\Omega)\hookrightarrow
L^{q}$ for
\[
q\geq1,\text{ if }N=1,2\text{ \ and \ }1\leq q\leq\frac{2N}{N-2}%
,\qquad\text{if }N\geq3,
\]
we arrive at
\[
\int_{\Omega}|\theta w+(1-\theta)\widetilde{w}|^{2(p-2)}|w-\widetilde{w}%
|^{2}\leq\left(  \int_{\Omega}|\theta w+(1-\theta)\widetilde{w}|^{2\gamma
(p-2)}\right)  ^{\frac{1}{\gamma}}\left(  \int_{\Omega}|w-\widetilde{w}%
|^{\frac{2\gamma}{\gamma-1}}\right)  ^{\frac{\gamma-1}{\gamma}}.
\]
By choosing $\gamma=\frac{N}{2}$, we infer
\[
\int_{\Omega}|\theta w+(1-\theta)\widetilde{w}|^{2(p-2)}|w-\widetilde{w}%
|^{2}\leq\left(  \int_{\Omega}|\theta w+(1-\theta)\widetilde{w}|^{N(p-2)}%
\right)  ^{\frac{2}{N}}\left(  \int_{\Omega}|w-\widetilde{w}|^{\frac{2N}{N-2}%
}\right)  ^{\frac{N-2}{N}}%
\]%
\[
\leq C\left[  \Vert\nabla w\Vert_{2}^{N(p-2)}+\Vert\nabla\widetilde{w}%
\Vert_{2}^{N(p-2)}\right]  \Vert\nabla w-\nabla\widetilde{w}\Vert_{2}^{2}%
\]%
\[
\leq2C\,\lambda M^{\frac{N(p-2)}{2}}\,\Vert\nabla w-\nabla\widetilde{w}%
\Vert_{2}^{2}.
\]
Therefore, $\left(  \ref{18}\right)  $ becomes
\[
d^{2}(u,\widetilde{u})\leq2C\,\lambda\,M^{\frac{N(p-2)}{2}}\,T\,d^{2}%
(w,\widetilde{w}).
\]
By taking $T$ even smaller so $\left(  \ref{16}\right)  $ remains valid and,
further,
\[
2C\,\lambda\,M^{\frac{N(p-2)}{2}}\,T<1,
\]
the mapping $\varphi$ is a contraction.

The Banach fixed-point theorem, then, guarantees that there exists a unique
$u\in X_{T}^{m}$ such that $\varphi(u)=u$. This is obviously the solution of
problem $\left(  \ref{1}\right)  $, for $T$ small enough. $\
\endproof
$

\section{Blow up}

In this section, we show that, under certain conditions on the initial data
and the nonlinearity, the local solution blows up infinite time. Before
stating our main result for this sections, we give a lemma whose proof can be
found in $\cite{6}$.

\begin{lemma}
\noindent\emph{There exists a positive }$C>1$\emph{, depending on }$\Omega
$\emph{\ only, such that}
\[
\left\Vert u\right\Vert _{p}^{s}\leq C\left(  ||\nabla u||_{2}^{2}+\left\Vert
u\right\Vert _{p}^{p}\right)  ,
\]
\emph{for any }$u\in H_{0}^{1}\left(  \Omega\right)  $\emph{\ and }$2\leq
s\leq p.$
\end{lemma}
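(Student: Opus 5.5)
The plan is to split according to the size of $\|u\|_p$ and combine the trivial bound $\|u\|_p^s\le\|u\|_p^p$ with the Sobolev--Poincaré embedding $H_0^1(\Omega)\hookrightarrow L^p(\Omega)$. The embedding is valid in the range of $p$ used later, namely $p\le 2N/(N-2)$ when $N\ge3$ and any $p$ when $N=1,2$; this restriction is implicit in the statement, since $\|u\|_p$ must be finite for $u\in H_0^1(\Omega)$. Let $B$ denote the embedding constant, so that
\[
\|u\|_p\le B\|\nabla u\|_2\quad\text{for all } u\in H_0^1(\Omega),
\]
where $B$ depends on $\Omega$ and $p$ only.

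\emph{Case 1: $\|u\|_p\ge1$.} Since $s\le p$, we have $\|u\|_p^s\le\|u\|_p^p$. Hence $\|u\|_p^s\le \|\nabla u\|_2^2+\|u\|_p^p$, and this case needs no constant at all.

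\emph{Case 2: $\|u\|_p<1$.} Now $s\ge2$ gives $\|u\|_p^s\le\|u\|_p^2$. Applying the embedding, $\|u\|_p^2\le B^2\|\nabla u\|_2^2\le B^2\bigl(\|\nabla u\|_2^2+\|u\|_p^p\bigr)$.

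Combining the two cases, the statement holds with $C=\max\{1,B^2\}+1>1$. This constant depends only on $\Omega$ (and the fixed exponent $p$) and is uniform in $s\in[2,p]$.

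There is no real obstacle in this argument. The only point needing care is to remember that the embedding constant is where the dependence on $\Omega$ enters, and that the admissible range of $p$ must allow $H_0^1\hookrightarrow L^p$. Adding $1$ to the constant makes it strictly larger than $1$, as the statement requires.
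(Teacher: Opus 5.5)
Your proof is correct and is the standard argument for this lemma. The paper does not prove it but refers to \cite{6}, where exactly this split is used: $\|u\|_p\ge1$ is handled by $s\le p$, and $\|u\|_p<1$ by $s\ge2$ together with the embedding $H_0^1(\Omega)\hookrightarrow L^p(\Omega)$.

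One correction to your aside: the embedding does not cover the whole range of $p$ used later in this paper, since Theorem 6 admits $p>2N/(N-2)$ when $N\ge3$. The restriction you impose is nonetheless genuinely necessary, because for $2\le s<p$ and $p>2N/(N-2)$ the inequality fails (take a concentrating profile rescaled so that $\|u\|_p=\eta\to0$ while $\|\nabla u\|_2/\|u\|_p\to0$).
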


Then, we define the energy functional by%
\begin{equation}
E\left(  t\right)  :=\frac{1}{2}\left(  \left\Vert u_{t}\right\Vert _{2}%
^{2}+\left\Vert \nabla u\right\Vert _{2}^{2}\right)  -\frac{1}{p}\left\Vert
u\right\Vert _{p}^{p}\label{19}%
\end{equation}
and, consequently, we have the following lemma.

\begin{lemma}
\emph{Along the solution of }$\left(  \ref{1}\right)  $, \emph{we have}
\[
E^{\prime}\left(  t\right)  :=-\int_{\Omega}u_{t}^{2}\ln\left(  1+u_{t}%
^{2}\right)  dx\leq0.
\]

\end{lemma}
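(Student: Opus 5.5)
The plan is the standard energy identity: multiply the equation in $\left(\ref{1}\right)$ by $u_{t}$, integrate over $\Omega$, and recognize each term as a time derivative of a piece of $E(t)$ or as the damping term. Throughout we work with the local solution given by Theorem 2. It has $u\in L^{\infty}(0,T;H^{2}(\Omega)\cap H_{0}^{1}(\Omega))$, $u_{t}\in L^{\infty}(0,T;H_{0}^{1}(\Omega))$ and $u_{tt}\in L^{\infty}(0,T;L^{2}(\Omega))$, and this is enough to justify every integration below.

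First, since $u_{tt}\in L^{2}(\Omega)$ and $u_{t}\in L^{2}(\Omega)$ for a.e. $t$, we have
\[
\int_{\Omega}u_{tt}u_{t}\,dx=\frac{1}{2}\frac{d}{dt}\Vert u_{t}\Vert_{2}^{2}.
\]
Second, we use Green's formula. The boundary term vanishes because $u_{t}=0$ on $\partial\Omega$ (indeed $u_{t}\in H_{0}^{1}$), and $\Delta u\in L^{2}$, so
\[
-\int_{\Omega}\Delta u\,u_{t}\,dx=\int_{\Omega}\nabla u\cdot\nabla u_{t}\,dx=\frac{1}{2}\frac{d}{dt}\Vert\nabla u\Vert_{2}^{2}.
\]
Third, for the source term we have
\[
\int_{\Omega}|u|^{p-2}u\,u_{t}\,dx=\frac{1}{p}\frac{d}{dt}\Vert u\Vert_{p}^{p}.
\]
This integral is finite by Lemma 2, since $|u|^{p-2}u\in L^{2}$ under $\left(\ref{14}\right)$. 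Finally, the damping term gives $\int_{\Omega}u_{t}^{2}\ln(1+u_{t}^{2})\,dx$. It is finite by the estimate in $\left(\ref{7}\right)$–$\left(\ref{8}\right)$: on $\{|u_{t}|\geq1\}$ it is bounded by $\Vert u_{t}\Vert_{2(1+\rho)/2}$-type norms, which are controlled by $\Vert\nabla u_{t}\Vert_{2}$ through the Sobolev embedding.

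Collecting the four terms gives
\[
\frac{d}{dt}\left[\frac{1}{2}\left(\Vert u_{t}\Vert_{2}^{2}+\Vert\nabla u\Vert_{2}^{2}\right)-\frac{1}{p}\Vert u\Vert_{p}^{p}\right]=-\int_{\Omega}u_{t}^{2}\ln(1+u_{t}^{2})\,dx,
\]
which is the identity $E'(t)=-\int_{\Omega}u_{t}^{2}\ln(1+u_{t}^{2})\,dx$ with $E$ as in $\left(\ref{19}\right)$. The sign $E'(t)\leq0$ then follows at once, because $u_{t}^{2}\ln(1+u_{t}^{2})\geq0$ pointwise.

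The only delicate point is justifying the chain rule $\frac{d}{dt}\Vert u\Vert_{p}^{p}=p\int|u|^{p-2}uu_{t}$ at this regularity. I would handle it in one of two ways. One option is to derive the identity at the Galerkin level, where everything is smooth in $t$, in integrated form over $(0,t)$, and then pass to the limit using $\left(\ref{10}\right)$–$\left(\ref{12}\right)$. The other is to note that $u\in W^{1,\infty}(0,T;H_{0}^{1})$ and that $v\mapsto\Vert v\Vert_{p}^{p}$ is $C^{1}$ on $H_{0}^{1}$ for $p$ in the admissible range. Either route gives the identity in integrated form, and hence a.e. in $t$.
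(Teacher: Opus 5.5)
Your argument is exactly the paper's: multiply the equation in $\left(\ref{1}\right)$ by $u_t$, integrate over $\Omega$, identify each term as a time derivative or as the damping integral, and read off the sign from $u_t^2\ln(1+u_t^2)\geq 0$. The paper's proof is only that one line, so your regularity remarks are additional; one correction to your second justification route is that $v\mapsto\Vert v\Vert_p^p$ is $C^1$ on $H_0^1(\Omega)$ only when $p\leq 2N/(N-2)$ for $N\geq 3$, which $\left(\ref{14}\right)$ does not guarantee, so it is safer to use $|u|^{p-2}u,\,u_t\in L^2(\Omega\times(0,T))$, apply the chain rule to $t\mapsto|u(x,t)|^p$ for a.e. $x$, and then integrate using Fubini.
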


\noindent\textbf{Proof.} We multiply $\left(  \ref{1}\right)  $ by $u_{t}$ and
integrate over $\Omega,$ we reach the result. $\
\endproof
$

Now, we are in position to state and prove our main theorem.

\begin{theorem}
Suppose that $2<p<\frac{N}{N-4}+1,$ if $n>4$ and $p>2$ if $N\leq4$ and let the
initial data $\left(  u_{0},u_{1}\right)  \in\left[  H^{2}\left(
\Omega\right)  \cap H_{0}^{1}\left(  \Omega\right)  \right]  \times H_{0}%
^{1}\left(  \Omega\right)  $ and satisfy \ $E\left(  0\right)  <0.$ Then, the
corresponding solution blows up in finite time.
\end{theorem}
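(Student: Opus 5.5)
I will follow the Georgiev--Todorova / Messaoudi approach, adapted to the logarithmic feedback. Set $H(t):=-E(t)$. By Lemma 5, $H'(t)=\int_\Omega u_t^2\ln(1+u_t^2)\,dx\ge 0$, and $E(0)<0$ gives
\[
0<H(0)\le H(t)\le \frac1p\|u\|_p^p .
\]
Arguing by contradiction, I assume the solution is global. I then consider the auxiliary functional
\[
L(t)=H^{1-\alpha}(t)+\varepsilon\int_\Omega u u_t\,dx ,
\]
with $0<\alpha<1$ and $\varepsilon>0$ small, both to be chosen. The goal is $L'(t)\ge \xi L^{1/(1-\alpha)}(t)$ with $L(0)>0$. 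Integrating this ODE inequality yields blow-up in finite time, which is the contradiction.

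\textbf{Computing $L'$.} Using the equation,
\[
L'=(1-\alpha)H^{-\alpha}H'+\varepsilon\|u_t\|_2^2-\varepsilon\|\nabla u\|_2^2+\varepsilon\|u\|_p^p-\varepsilon\int_\Omega u u_t\ln(1+u_t^2)\,dx .
\]
I add and subtract $\varepsilon p(1-\eta)H$ with a small $\eta>0$, expressing $H$ through the energy. This produces positive multiples of $\|u_t\|_2^2$, $\|\nabla u\|_2^2$, $\|u\|_p^p$ and $H$, using $p>2$.

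\textbf{The damping term (main obstacle).} The whole difficulty is the term $\int_\Omega u u_t\ln(1+u_t^2)\,dx$.

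On $\Omega^-=\{|u_t|<1\}$, I use $\ln(1+u_t^2)\le u_t^2$ and Young's inequality:
\[
\Big|\int_{\Omega^-} u u_t\ln(1+u_t^2)\Big|\le \delta\|u\|_2^2+C_\delta\int_{\Omega^-}u_t^2\ln(1+u_t^2)\le \delta C\|\nabla u\|_2^2+C_\delta H' .
\]

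On $\Omega^+=\{|u_t|\ge 1\}$, I write $|u u_t|\ln(1+u_t^2)=|u|\,\ln^{1/2}(1+u_t^2)\cdot |u_t|\ln^{1/2}(1+u_t^2)$ and apply Young's inequality with a weight $\delta$. The second factor squared is bounded by $H'$. The first factor squared is $u^2\ln(1+u_t^2)$, and controlling it is the hardest point. Since $\ln(1+u_t^2)\le C_\rho |u_t|^{2\rho}$ for $|u_t|\ge1$ by (2), I use Hölder to split it into a power of $\|u\|_p$ and a power of $\int_{\Omega^+}u_t^2\ln(1+u_t^2)\le H'$.

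To handle the resulting $H'$ factor, I choose the Young weight as $\delta=M H^{-\alpha}$, so that $\delta^{-1}H'$ is absorbed into $(1-\alpha)H^{-\alpha}H'$ once $M$ is large. The remaining term is of the form $H^{\alpha}\|u\|_p^{2}$, up to logarithmic corrections controlled by choosing $\rho$ small. Since $H\le \frac1p\|u\|_p^p$, this is at most $C\|u\|_p^{2+\alpha p}$. Choosing $\alpha$ small, with $2<2+\alpha p\le p$, Lemma 3 gives
\[
\|u\|_p^{2+\alpha p}\le C\big(\|\nabla u\|_2^2+\|u\|_p^p\big),
\]
which is absorbed by the positive terms. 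This choice is where the restriction $p<\frac N{N-4}+1$ (through the $H^2$ regularity and embeddings) may be used to justify the Hölder step. If the pointwise splitting fails, I will try instead to bound $\ln(1+u_t^2)\le \ln(1+\|u_t\|_\infty^2)$ via the local regularity.

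\textbf{Closing the argument.} Having achieved
\[
L'\ge \gamma\big(H+\|u_t\|_2^2+\|\nabla u\|_2^2+\|u\|_p^p\big)
\]
after fixing $M$, $\eta$, and $\varepsilon$ small (the latter also ensuring $L(0)>0$), I estimate
\[
\Big|\int_\Omega u u_t\Big|^{1/(1-\alpha)}\le C\|u\|_p^{1/(1-\alpha)}\|u_t\|_2^{1/(1-\alpha)} .
\]
Young's inequality with exponents $2(1-\alpha)$ and its conjugate reduces this to $\|u_t\|_2^2+\|u\|_p^{2/(1-2\alpha)}$. Lemma 3 applies to the last term provided $2/(1-2\alpha)\le p$, which is another smallness condition on $\alpha$. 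Hence
\[
L^{1/(1-\alpha)}\le C\big(H+\|u_t\|_2^2+\|\nabla u\|_2^2+\|u\|_p^p\big),
\]
so $L'\ge\xi L^{1/(1-\alpha)}$. Integrating gives blow-up of $L$ at a time $T^*\le C L(0)^{-\alpha/(1-\alpha)}$, contradicting global existence.
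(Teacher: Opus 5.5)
Your architecture is the paper's: the same $H=-E$, the same $L=H^{1-\alpha}+\varepsilon\int_\Omega uu_t\,dx$, the same splitting into $\Omega^{\pm}$, and the same closing bound for $L^{1/(1-\alpha)}$. The damping estimate, which is the only delicate point, does not go through as written.

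\textbf{The step on $\Omega^-$ fails.} You take a fixed $\delta$ and obtain $\delta\|u\|_2^2+C_\delta H'$, so $L'$ picks up the term $-\varepsilon C_\delta H'$. The only term that can absorb it is $(1-\alpha)H^{-\alpha}H'$. That requires $\varepsilon C_\delta H^{\alpha}(t)\le 1-\alpha$ for all $t$. No fixed $\varepsilon$ gives this, because $H$ is nondecreasing and is exactly the quantity that should blow up. The weight must be time dependent, as in the paper's choice of $\delta_1$. Taking $\delta=H^{\alpha}(t)/(4k)$ produces $kH^{-\alpha}H'$, which is absorbed once $\varepsilon k<1-\alpha$. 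It also produces $\frac{1}{4k}H^{\alpha}\|u\|_2^2\le\frac Ck\|u\|_p^{2+\alpha p}$, which $\|u\|_p^s\le C(\|\nabla u\|_2^2+\|u\|_p^p)$ controls since $2+\alpha p\le p$.

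The paper's own treatment of $\Omega^+$ has this same defect. It uses H\"older with exponents $p$, $p/(p-1)$ and a fixed $\delta_2$, which leaves an unweighted $c(\delta_2)H'$; the true coefficient of $H'$ in its lower bound for $L'$ is $(1-\alpha-\varepsilon k)H^{-\alpha}-\varepsilon c(\delta_2)$. Weighting that step instead would produce $H^{\alpha(p-1)}\|u\|_p^p$, a power of $\|u\|_p$ above $p$. So your instinct to weight on $\Omega^+$ and keep the exponents near $2$ is the right one.

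\textbf{The step on $\Omega^+$ is only announced, and its description is inaccurate.} First, the weight is inverted. As written, $\delta=MH^{-\alpha}$ puts $H^{+\alpha}$ on the $H'$ term. You need $\delta=H^{\alpha}/(4k)$ again: the $H'$ piece is absorbed by taking $\varepsilon$ small, and large $k$ is what makes the complementary term small.

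With that weight, what remains is $\frac{1}{4k}H^{\alpha}\int_{\Omega^+}u^2\ln(1+u_t^2)\,dx$. Since $\ln(1+u_t^2)$ is unbounded, this is not ``$H^\alpha\|u\|_p^2$ up to logarithmic corrections''.

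\emph{H\"older against $H'$, as you announce.} Using $\ln(1+u_t^2)\le C_\rho|u_t|^{2\rho}$, and $u_t^2\le u_t^2\ln(1+u_t^2)/\ln 2$ on $\Omega^+$, you get $\frac Ck H^{\alpha}\|u\|_p^2(H')^{\rho}$. This needs a second Young inequality whose $H'$ part again carries $H^{-\alpha}$. The complementary term is then $\|u\|_p^{s}$ with $s=\frac{2+\alpha p(1+\rho)}{1-\rho}$, admissible only because $\rho$ and $\alpha$ are small. That computation is the heart of the proof and must be supplied.

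\emph{H\"older against $\|u_t\|_2$, which is simpler.} We have $\int_{\Omega^+}u^2\ln(1+u_t^2)\,dx\le C_\rho\|u\|_{2/(1-\rho)}^2\|u_t\|_2^{2\rho}$. Hence $\frac1k H^{\alpha}\|u\|_p^2\|u_t\|_2^{2\rho}\le\frac Ck\big(\|u\|_p^{(2+\alpha p)/(1-\rho)}+\|u_t\|_2^2\big)$. Both terms are absorbed for $k$ large, provided $\alpha+\rho\le 1-\frac2p$.

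\textbf{Two further remarks.} Your fallback $\ln(1+u_t^2)\le\ln(1+\|u_t\|_\infty^2)$ is not usable: $H_0^1(\Omega)\not\subset L^\infty(\Omega)$ for $N\ge 2$, and nothing in the closing argument controls $\|u_t\|_\infty$. Also, the restriction $p<\frac N{N-4}+1$ plays no role in these estimates; it enters only the local existence theory.

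With the time-dependent weight on $\Omega^-$ and the completed estimate on $\Omega^+$, the rest of your argument closes as you describe.
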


\noindent\textbf{Proof.} \qquad We let%

\[
H\left(  t\right)  =-E\left(  t\right)  \geq0,
\]
therefore, \
\[
H^{\prime}\left(  t\right)  =-E^{\prime}\left(  t\right)  =\int_{\Omega}%
u_{t}^{2}\ln\left(  1+u_{t}^{2}\right)  dx\geq0.
\]

Also, for some $\varepsilon>0$ to be specified later, we define%
\[
L\left(  t\right)  =\varepsilon\int_{\Omega}uu_{t}dx+H^{1-\alpha}\left(
t\right)  ,
\]
where
\begin{equation}
0<\alpha<\frac{p-2}{2p}<1.\label{alfa}%
\end{equation}
Direct differentiation of $L\left(  t\right)  $, using $\left(  \ref{1}%
\right)  , $ leads to
\begin{align*}
L^{\prime}\left(  t\right)   & =\left(  1-\alpha\right)  H^{-\alpha}\left(
t\right)  H^{\prime}\left(  t\right)  +\varepsilon\left\Vert u_{t}\right\Vert
_{2}^{2}+\varepsilon\int_{\Omega}uu_{tt}dx\\
& =\left(  1-\alpha\right)  H^{-\alpha}\left(  t\right)  H^{\prime}\left(
t\right)  +\varepsilon\left\Vert u_{t}\right\Vert _{2}^{2}-\varepsilon
\left\Vert \nabla u\right\Vert _{2}^{2}+\varepsilon\left\Vert u\right\Vert
_{p}^{p}\\
& -\varepsilon\int_{\Omega}uu_{t}\ln\left(  1+u_{t}^{2}\right)  dx.
\end{align*}
Recalling the definition of $H(t),$ we have for, $0<a<1,$%
\begin{align*}
L^{\prime}\left(  t\right)   & =\left(  1-\alpha\right)  H^{-\alpha}\left(
t\right)  H^{\prime}\left(  t\right)  +\varepsilon\frac{p\left(  1-a\right)
+2}{2}\left\Vert u_{t}\right\Vert _{2}^{2}+\varepsilon\frac{p\left(
1-a\right)  -2}{2}\left\Vert \nabla u\right\Vert _{2}^{2}\\
& +\varepsilon p\left(  1-a\right)  H\left(  t\right)  +\varepsilon
a\left\Vert u\right\Vert _{p}^{p}-\varepsilon\int_{\Omega}uu_{t}\ln\left(
1+u_{t}^{2}\right)  dx.
\end{align*}
To handle the last term, we define
\[
\Omega^{-}=\left\{  x\in\Omega|\left\vert u_{t}\right\vert <1\right\}  \text{
\ and \ }\Omega^{+}=\left\{  x\in\Omega|\left\vert u_{t}\right\vert
\geq1\right\}  ;
\]
hence, we get%
\begin{align*}
\int_{\Omega^{-}}uu_{t}\ln\left(  1+u_{t}^{2}\right)  dx  & \leq\left(
\int_{\Omega^{-}}u^{2}dx\right)  ^{\frac{1}{2}}\left(  \int_{\Omega^{-}%
}\left(  \left\vert u_{t}\right\vert \ln\left(  1+u_{t}^{2}\right)  \right)
^{2}dx\right)  ^{\frac{1}{2}}\\
& \leq\left\Vert u\right\Vert _{2}\left(  \int_{\Omega^{-}}u_{t}^{2}\left(
\ln2\right)  \ln\left(  1+u_{t}^{2}\right)  dx\right)  ^{\frac{1}{2}}\\
& \leq\left\Vert u\right\Vert _{2}\left(  \int_{\Omega^{-}}u_{t}^{2}\ln\left(
1+u_{t}^{2}\right)  dx\right)  ^{\frac{1}{2}}.
\end{align*}
Young's inequality, for any $\delta_{1}>0,$ implies%
\[
\int_{\Omega^{-}}uu_{t}\ln\left(  1+u_{t}^{2}\right)  dx\leq\delta
_{1}\left\Vert u\right\Vert _{2}^{2}+\delta_{1}^{-1}H^{\prime}\left(
t\right)  .
\]
This inequality remains valid even if $\delta_{1}$ is time dependent since the
integral is taken over the $x-$variable. Therefore, by taking $\delta_{1}$ so
that $\delta_{1}^{-1}=kH^{-\alpha}\left(  t\right)  $, for some $k>0 $, then
we have%
\[
\int_{\Omega^{-}}uu_{t}\ln\left(  1+u_{t}^{2}\right)  dx\leq\frac{1}%
{k}H^{\alpha}\left(  t\right)  \left\Vert u\right\Vert _{2}^{2}+kH^{-\alpha
}\left(  t\right)  H^{\prime}\left(  t\right)  .
\]
From the definition of $H\left(  t\right)  ,$ we infer that
\[
H^{\alpha}\left(  t\right)  \leq\left(  \frac{1}{p}\right)  ^{\alpha
}\left\Vert u\right\Vert _{p}^{\alpha p};
\]
thus,%
\[
H^{\alpha}\left(  t\right)  \left\Vert u\right\Vert _{2}^{2}\leq B\left\Vert
u\right\Vert _{p}^{2+\alpha p},\text{ \ }B>0.
\]
Since $s=2+\alpha p<p,$ lemma 4 implies that%
\[
H^{\alpha}\left(  t\right)  \left\Vert u\right\Vert _{2}^{2}\leq C\left[
\left\Vert u\right\Vert _{p}^{p}-H\left(  t\right)  -\left\Vert u_{t}%
\right\Vert _{2}^{2}\right]
\]
and, consequently,%
\begin{equation}
\int_{\Omega^{-}}uu_{t}\ln\left(  1+u_{t}^{2}\right)  dx\leq\frac{C}{k}\left[
\left\Vert u\right\Vert _{p}^{p}-H\left(  t\right)  -\left\Vert u_{t}%
\right\Vert _{2}^{2}\right]  +kH^{-\alpha}\left(  t\right)  H^{\prime}\left(
t\right)  .\label{24}%
\end{equation}
On the other hand, using H\"{o}lder's inequality and relation $\left(
\ref{2}\right)  $, we have%
\begin{align*}
& \int_{\Omega^{+}}uu_{t}\ln\left(  1+u_{t}^{2}\right)  dx\\
& \leq\left(  \int_{\Omega^{+}}\left\vert u\right\vert ^{p}dx\right)
^{\frac{1}{p}}\left(  \int_{\Omega^{+}}\left(  \left\vert u_{t}\right\vert
\ln\left(  1+u_{t}^{2}\right)  \right)  ^{\frac{p}{p-1}}dx\right)
^{\frac{p-1}{p}}\\
& \leq\left(  \int_{\Omega^{+}}\left\vert u\right\vert ^{p}dx\right)
^{\frac{1}{p}}\left(  \int_{\Omega^{+}}\left\vert u_{t}\right\vert
^{\frac{p-2}{p-1}}\left(  u_{t}^{2}\ln\left(  1+u_{t}^{2}\right)  \right)
^{\frac{1}{p-1}}\left(  \ln\left(  1+u_{t}^{2}\right)  \right)  dx\right)
^{\frac{p-1}{p}}\\
& \leq\left(  \int_{\Omega^{+}}\left\vert u\right\vert ^{p}dx\right)
^{\frac{1}{p}}\left(  \int_{\Omega^{+}}\left\vert u_{t}\right\vert
^{\frac{p-2}{p-1}}\left[  \left(  u_{t}^{2}\right)  ^{1+\rho}\right]
^{\frac{1}{p-1}}\left(  \ln\left(  1+u_{t}^{2}\right)  \right)  dx\right)
^{\frac{p-1}{p}}.
\end{align*}
The choice of $\rho=\frac{p}{2}-1>0$ and Young's inequality, for any
$\delta_{2}>0,$ yield%
\begin{align}
\int_{\Omega^{+}}uu_{t}\ln\left(  1+u_{t}^{2}\right)  dx  & \leq\left(
\int_{\Omega^{+}}\left\vert u\right\vert ^{p}dx\right)  ^{\frac{1}{p}}\left(
\int_{\Omega^{+}}u_{t}^{2}\ln\left(  1+u_{t}^{2}\right)  dx\right)
^{\frac{p-1}{p}}\nonumber\\
& \leq C\left\Vert u\right\Vert _{p}\left(  H^{\prime}\left(  t\right)
\right)  ^{\frac{p-1}{p}}\label{25}\\
& \leq\delta_{2}\left\Vert u\right\Vert _{p}^{p}+c\left(  \delta_{2}\right)
H^{\prime}\left(  t\right)  .\nonumber
\end{align}
Combining $\left(  \ref{24}\right)  $ and $\left(  \ref{25}\right)  ,$ we get
\begin{align*}
& \int_{\Omega}uu_{t}\ln\left(  1+u_{t}^{2}\right)  dx\\
& \leq\frac{C}{k}\left[  \left\Vert u\right\Vert _{p}^{p}-H\left(  t\right)
-\left\Vert u_{t}\right\Vert _{2}^{2}\right]  +kH^{-\alpha}\left(  t\right)
H^{\prime}\left(  t\right)  +\delta_{2}\left\Vert u\right\Vert _{p}%
^{p}+c\left(  \delta_{2}\right)  H^{\prime}\left(  t\right)  .
\end{align*}
Consequently, we have%
\begin{align}
L^{\prime}\left(  t\right)   & \geq\left[  \left(  1-\alpha\right)
-\varepsilon kH^{-\alpha}\left(  t\right)  -\varepsilon c\left(  \delta
_{2}\right)  \right]  H^{\prime}\left(  t\right)  +\varepsilon\left(
\frac{p\left(  1-a\right)  +2}{2}+\frac{C}{k}\right)  \left\Vert
u_{t}\right\Vert _{2}^{2}\nonumber\\
& +\varepsilon\left(  p\left(  1-a\right)  +\frac{C}{k}\right)  H\left(
t\right)  +\varepsilon\left(  \frac{p\left(  1-a\right)  -2}{2}\right)
\left\Vert \nabla u\right\Vert _{2}^{2}\label{26}\\
& +\varepsilon\left[  a-\frac{C}{k}-\delta_{2}\right]  \left\Vert u\right\Vert
_{p}^{p}.\nonumber
\end{align}
As $p>2,$ we pick $0<a<1-\frac{2}{p}<1$ so that
\[
\frac{p\left(  1-a\right)  -2}{2}>0.
\]
Also, for $k$ large enough and $\delta_{2}$ small, we can obtain%
\[
a-\frac{C}{k}-\delta_{2}>0.
\]
Finally, we choose $\varepsilon$ small so that
\[
\left(  1-\alpha\right)  -\varepsilon\left[  kH^{-\alpha}\left(  0\right)
-c\left(  \delta_{2}\right)  \right]  >0
\]
and%
\[
L\left(  0\right)  =\varepsilon\int_{\Omega}u_{0}u_{1}dx+H^{1-\alpha}\left(
0\right)  >0
\]
which implies that%
\[
\left(  1-\alpha\right)  -\varepsilon\left(  kH^{-\alpha}\left(  t\right)
-c\left(  \delta_{2}\right)  \right)  >0,\text{ \ \ }\forall t\geq0.
\]

\noindent Therefore, $\left(  \ref{26}\right)  $ becomes%
\begin{equation}
L^{\prime}\left(  t\right)  \geq\varepsilon\left[  H\left(  t\right)
+\left\Vert u_{t}\right\Vert _{2}^{2}+\left\Vert \nabla u\right\Vert _{2}%
^{2}+\left\Vert u\right\Vert _{p}^{p}\right] \label{27}%
\end{equation}
and, hence,%
\[
L(t)\geq L(0)>0,\text{ \ \ \ }\forall t\geq0.
\]
In the other hand,%
\[
\left\vert \int_{\Omega}uu_{t}(x,t)dx\right\vert \leq\left\Vert u\right\Vert
_{2}\left\Vert u_{t}\right\Vert _{2}\leq C\left\Vert u\right\Vert
_{p}\left\Vert u_{t}\right\Vert _{2},
\]
which implies
\[
\left\vert \int_{\Omega}uu_{t}(x,t)dx\right\vert ^{\frac{1}{1-\alpha}}\leq
C\left\Vert u\right\Vert _{p}^{\frac{1}{1-\alpha}}\left\Vert u_{t}\right\Vert
_{2}^{\frac{1}{1-\alpha}}%
\]
and Young's inequality yields
\[
\left\vert \int_{\Omega}uu_{t}(x,t)dx\right\vert ^{\frac{1}{1-\alpha}}\leq
C\left[  \left\Vert u\right\Vert _{p}^{\frac{\mu}{1-\alpha}}+\left\Vert
u_{t}\right\Vert _{2}^{\frac{\theta}{1-\alpha}}\right]  ,
\]
where $1/\mu+1/\theta=1.$ The choice of $\theta=2\left(  1-\alpha\right)  $
will make $\mu/\left(  1-\alpha\right)  =2/\left(  1-2\alpha\right)  \leq p$
by $\left(  \ref{alfa}\right)  $. Therefore,
\[
\left\vert \int_{\Omega}uu_{t}(x,t)dx\right\vert ^{\frac{1}{1-\alpha}}\leq
C\left[  \left\Vert u\right\Vert _{p}^{s}+\left\Vert u_{t}\right\Vert _{2}%
^{2}\right]  ,
\]
where $s=\mu/\left(  1-\alpha\right)  .$ Using Lemma 4, we arrive at
\[
\left\vert \int_{\Omega}uu_{t}(x,t)dx\right\vert ^{\frac{1}{1-\alpha}}\leq
C\left(  \left\Vert u_{t}\right\Vert _{2}^{2}+||\nabla u||_{2}^{2}+\left\Vert
u\right\Vert _{p}^{p}\right)  .
\]
Consequently, we obtain
\begin{align}
L^{1/(1-\alpha)}(t)  & =\left[  H^{\left(  1-\alpha\right)  }(t)+\varepsilon
\int_{\Omega}uu_{t}(x,t)dx\right]  ^{1/(1-\alpha)}\nonumber\\
& \leq2^{1/(1-\alpha)}\left[  H(t)+\left\vert \int_{\Omega}uu_{t}%
(x,t)dx\right\vert ^{1/(1-\alpha)}\right] \label{28}\\
& \leq C\left[  H(t)+||u_{t}||_{2}^{2}+||\nabla u||_{2}^{2}+\left\Vert
u\right\Vert _{p}^{p}\right]  .\nonumber
\end{align}
Hence, $\left(  \ref{27}\right)  $ and $\left(  \ref{28}\right)  $ imply that,
for some $\mu>0,$
\[
L^{\prime}(t)\geq\mu L^{1/(1-\alpha)}(t).
\]
Integration over $(0,t)$ yields
\[
L^{\alpha/(1-\alpha)}(t)\geq\frac{1}{L^{-\alpha/(1-\alpha)}(0)-\mu\alpha
t/(1-\alpha)}.
\]
Therefore, the solutions cannot be exist after the time
\[
T^{\ast}=\frac{1-\alpha}{\mu\alpha L^{-\alpha/(1-\alpha)}(0)}.
\]
This completes the proof.$\ \ \ \ \ \ \ \ \ \ \
\endproof
$

\section{Numerical Examples}
In light of the obtained results, we present in this section four numerical experiments designed to validate the theoretical findings of Theorem~6 and to illustrate blow-up phenomena for problem~(1). 

The evolution of the system is analyzed through the energy functional defined in~(21), which serves as a key indicator for distinguishing between stable and unstable dynamics. The equation~(1) is discretized using a second-order finite-difference scheme in both time and space over the space-time domain.  
\[
[0,1]\times [0,T_{b}] = [0,1]\times [0,1],
\]
where \(T_b\) denotes the critical time at which the blow-up behavior starts to develop. Similar numerical constructions can be found in \cite{21, 22}.  

The numerical experiments are organized to highlight the blow-up regimes, where the solution undergoes rapid growth and the energy functional increases sharply, indicating loss of regularity in finite time.

All computations are performed under stability conditions ensuring consistency and convergence of the discrete model. Particular attention is given to accurately capturing the transition near \(T_b\), where the qualitative behavior of the solution changes from dissipative to unstable.
\subsection{One-dimensional case (Test 1)}
In this test, we investigate the behavior of the solution to equation~(1) in the \textbf{pre–blow-up regime}, with particular emphasis on the evolution of the energy functional defined in~(21). 

In the model~(1), the nonlinear exponent \(p\) is chosen sufficiently large so as to ensure that the initial energy satisfies \(E(0)<0\). This choice is crucial, as it places the system within the framework of Theorem~6, where so called negative initial energy is associated with the occurrence of blow-up in finite time. Consequently, the numerical simulations are expected to capture both the initial decay phase and the subsequent transition toward instability.

The governing system is discretized on the space–time domain $[0,1]\times [0,T]$ using a second-order finite-difference approximation. To preserve the conservative structure of the continuous system, the \textit{Lax–Wendroff} finite volume method is implemented, ensuring second-order accuracy in both time and space.

The discretization parameters satisfy the CFL stability condition:
\[
\Delta t = 0.0025 < \Delta x = 0.001.
\]

The initial conditions are given by
\begin{equation}
u(x,0) = (1-x)x, 
\qquad 
u_t(x,0) = 0, 
\quad \text{for } x \in [0,1].
\label{InSol1}
\end{equation}

These smooth and symmetric initial data generate a regular solution during the early stage of evolution. Figure~\ref{wavefewT1d} illustrates the temporal evolution of $u(x,t)$ at selected spatial points, together with solution snapshots, showing a gradual decrease in amplitude.

As long as the solution remains in the \textbf{pre–blow-up phase}, the associated energy functional~(21) exhibits a temporarily decay, as shown in Figure~\ref{energy1dBUP} (left panel). 

However, due to the condition \(E(0)<0\), this decay is only transient. As the system evolves, a qualitative change in behavior occurs: the energy ceases to decay and begins to increase rapidly, signaling the onset of the \textbf{blow-up regime}. This transition reflects the dominance of the nonlinear effects introduced by the large exponent \(p\).

The blow-up behavior is clearly illustrated in Figure~\ref{energy1dBUP} (right panel), where the energy functional grows sharply in finite time, indicating instability and loss of regularity of the solution.

This comparison highlights the transition from a temporarily stable dissipative regime to an unstable blow-up regime within the same model.

\begin{figure}[htp]
\centering
\includegraphics[width=0.3\textwidth,height=0.3\textwidth]{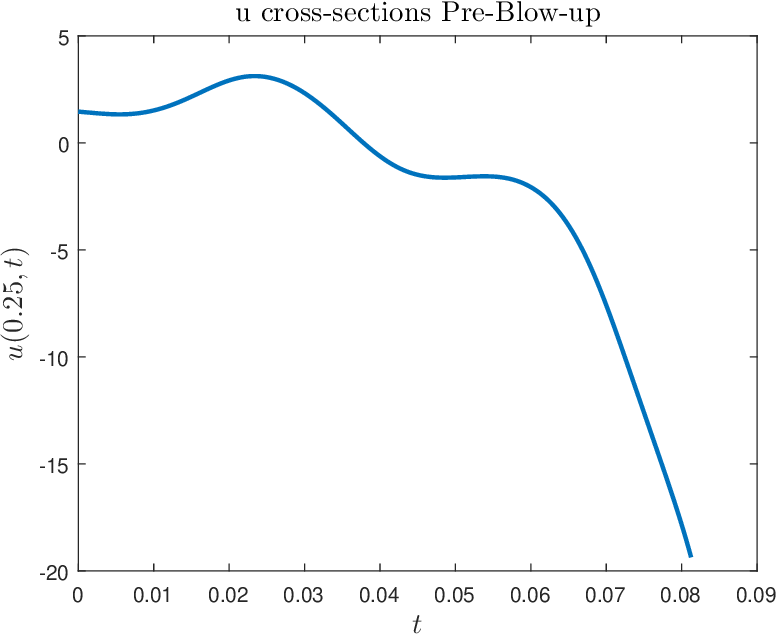}
\includegraphics[width=0.3\textwidth,height=0.3\textwidth]{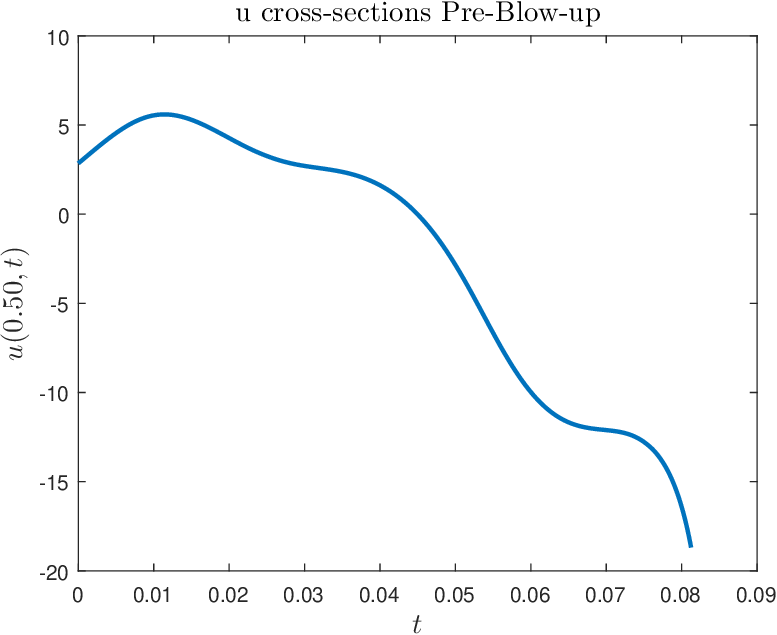}
\includegraphics[width=0.3\textwidth,height=0.3\textwidth]{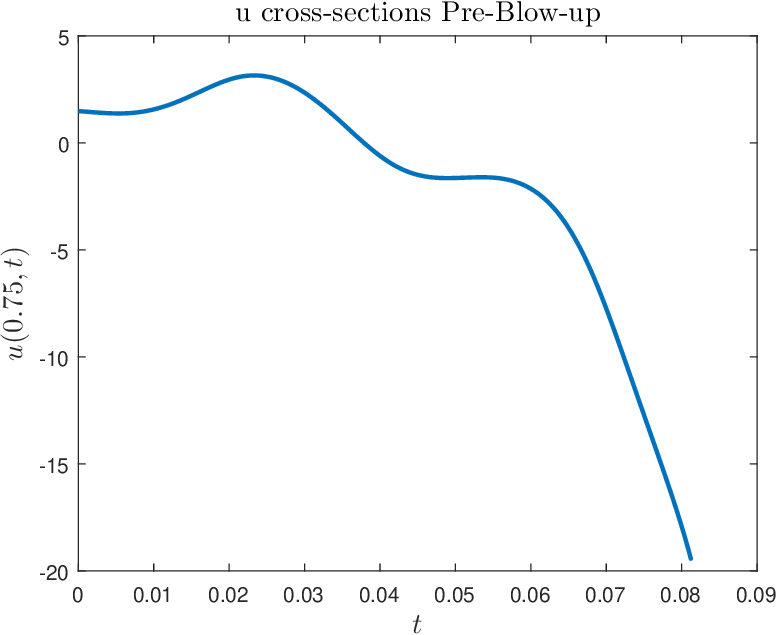}
\caption{TEST~1: Temporal evolution of $u(x,t)$ in the one-dimensional case.}
\label{wavefewT1d}
\end{figure}

\begin{figure}[htp]
\centering
\includegraphics[width=0.45\textwidth,height=0.35\textwidth]{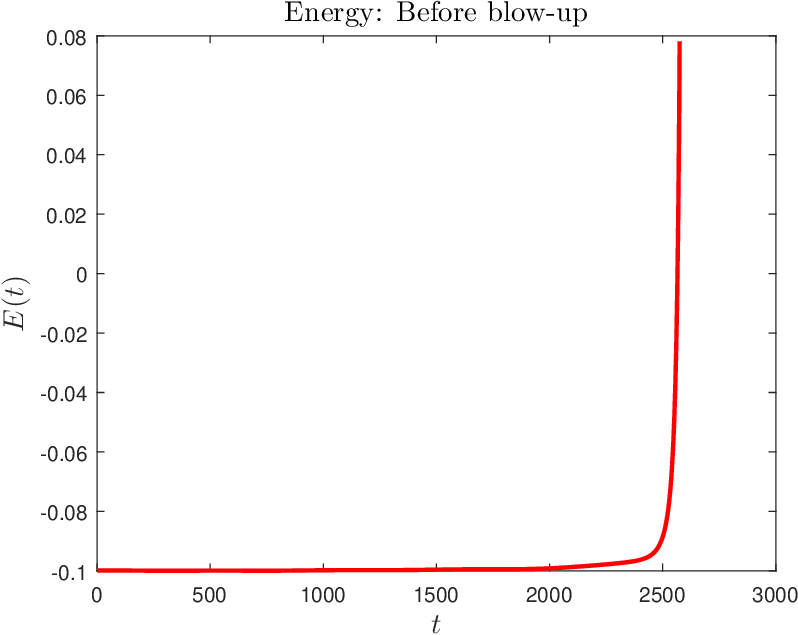}
\includegraphics[width=0.45\textwidth,height=0.35\textwidth]{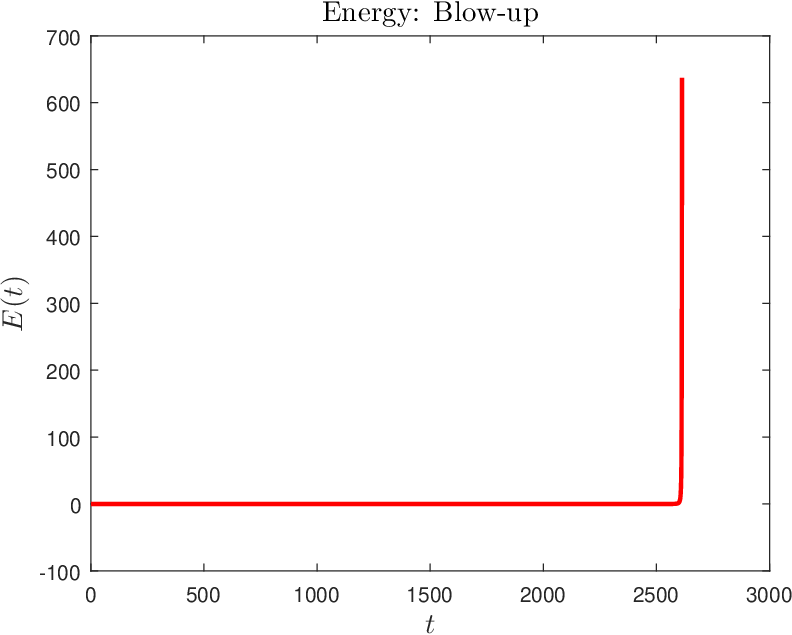}
\caption{TEST~1: The one dimensional Energy functional~(21) indicating the blow-up behavior.}
\label{energy1dBUP}
\end{figure}

The energy curve clearly exhibits a finite-time blow-up behavior: after an initial decay phase, the energy reaches a critical threshold and then increases rapidly, diverging in a short time interval. This sharp growth confirms the theoretical prediction associated with the condition \(E(0)<0\) and reflects the loss of regularity of the solution as the nonlinear effects become dominant.
\subsection{Two-dimensional case (Test 2)}

In the second numerical experiment, we investigate the two-dimensional behavior of equation~(1), with the aim of validating the predictions of Theorem~6 through the evolution of the energy functional defined in~(21). 

As in the one-dimensional case, the nonlinear exponent \(p\) is chosen sufficiently large to ensure that the initial energy satisfies \(E(0)<0\). This condition plays a fundamental role in the analysis, as it places the system in a regime where blow-up in finite time is expected. The numerical simulations are therefore designed to capture the transition toward instability.

The initial data are chosen as
\begin{eqnarray}\nonumber
u(x,y,0) &=& (1-x)(1-y)xy 
\left(200e^{-75\big((x-0.8)^2+(y-0.5)^2\big)} 
- e^{-75\big((x-0.5)^2+(y-0.8)^2\big)}\right),\\
\label{InSol2d}
u_t(x,y,0) &=& (1-x)(1-y)xy, 
\quad \text{for } (x,y)\in [0,1]\times[0,1].
\end{eqnarray}

The discretization uses $\Delta t = 0.0005$ and $\Delta x = \Delta y = 10^{-2}$, satisfying the CFL condition:
\begin{equation}\label{2dCFL}
\Delta t < \frac{(\Delta x)^2}{4}.
\end{equation}

During the \textbf{pre–blow-up regime}, the solution remains regular and the energy functional~(21) exhibits a temporarily decay phase. The amplitude decreases gradually while the spatial profile becomes smoother, as illustrated in Figure~\ref{wavefewT2d}. This slower decay, compared to the decay behavior observed in one dimension, is consistent with the influence of higher-dimensional effects on the dissipation mechanism.

However, due to the condition \(E(0)<0\), this decay is only transient. As time progresses, nonlinear effects become dominant and the system transitions to the \textbf{blow-up regime}. In this phase, the energy ceases to decrease and instead grows rapidly, indicating instability and loss of regularity in finite time.

This transition is clearly captured in Figure~\ref{energy2dBUP}:
\begin{itemize}
\item The left panel shows the temporarily decay in the pre–blow-up phase.
\item The right panel illustrates the blow-up behavior, where the energy increases sharply.
\end{itemize}

The sharp growth of the energy confirms the theoretical prediction of Theorem~6 and highlights the stronger instability mechanism present in the two-dimensional setting.

\begin{figure}[htp]
\centering
\includegraphics[width=0.45\textwidth,height=0.4\textwidth]{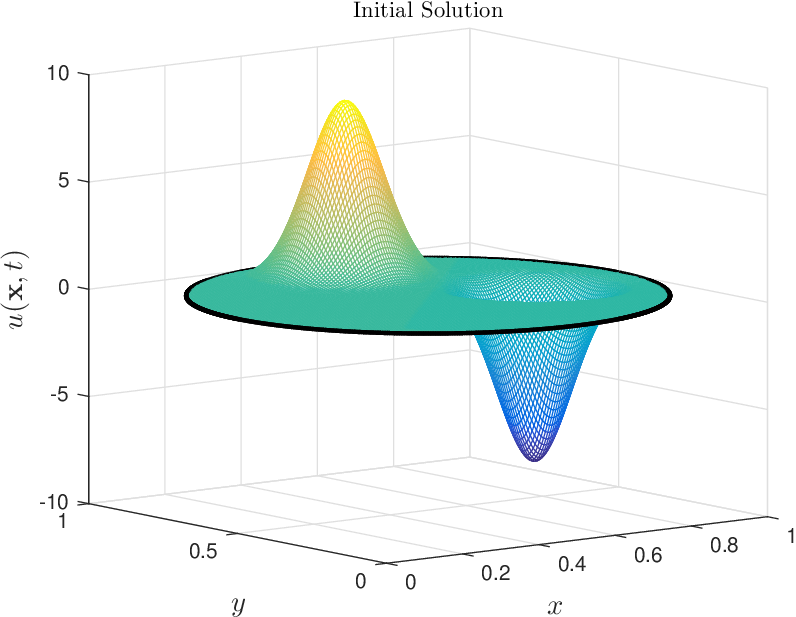}
\includegraphics[width=0.45\textwidth,height=0.4\textwidth]{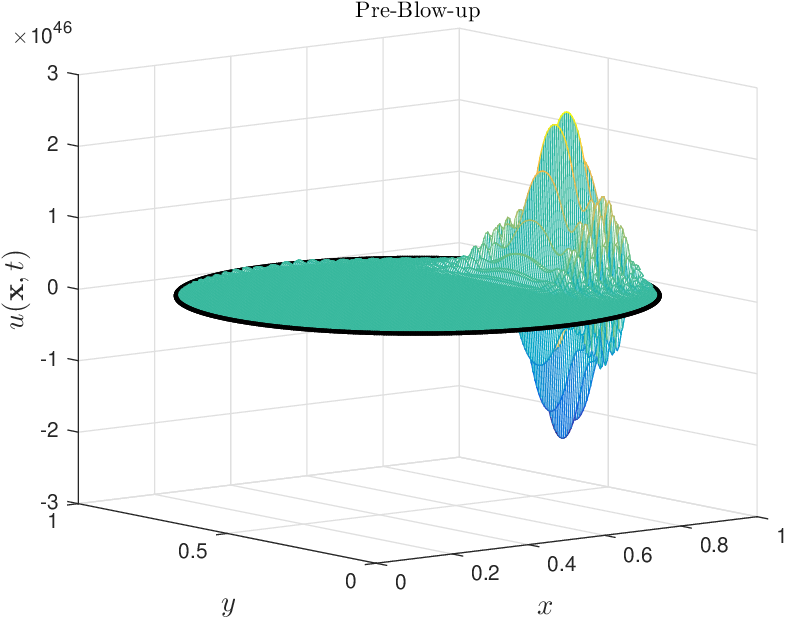}
\caption{TEST~2: Two-dimensional solution evolution: Initial Solution 
and an intermediary mesh of the function.}
\label{wavefewT2d}
\end{figure}

\begin{figure}[htp]
\centering
\includegraphics[width=0.45\textwidth,height=0.35\textwidth]{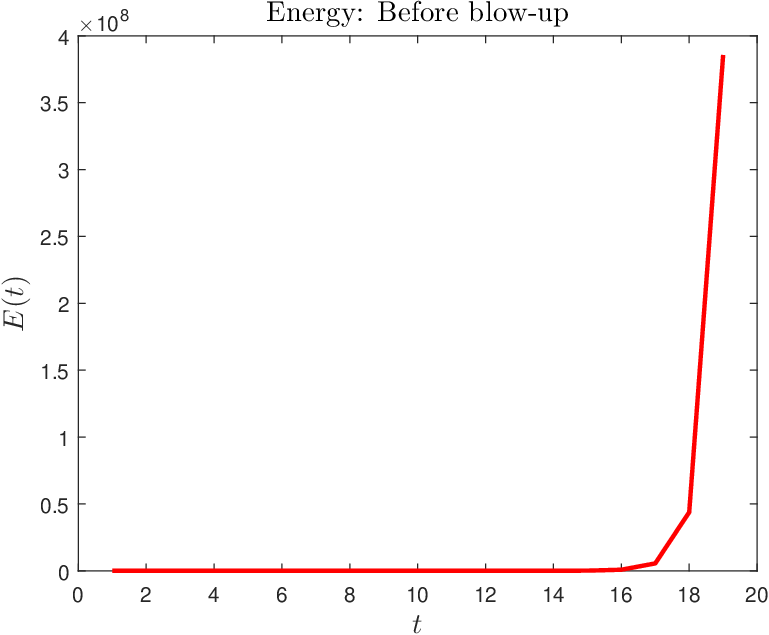}
\includegraphics[width=0.45\textwidth,height=0.35\textwidth]{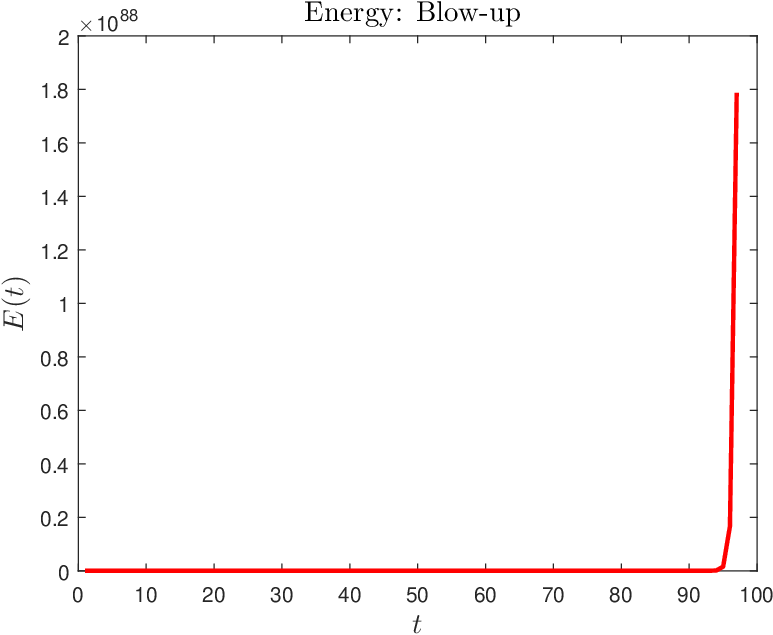}
\caption{TEST~2: The two dimensional Energy functional~(21) indicating the blow-up behavior.}
\label{energy2dBUP}
\end{figure}
$~$\\
\textbf{Remark}
The numerical results show a clear difference between the one-dimensional and two-dimensional cases. In one dimension, the solution passes through a transient regime in which the energy decays  before the blow-up mechanism becomes dominant. In two dimensions, the decay is slower, and the transition toward blow-up is more pronounced. This indicates that the instability mechanism is stronger in the two-dimensional setting, where the effect of the nonlinear terms overcomes the dissipative behavior more rapidly. In particular, the energy growth near the blow-up time appears steeper in two dimensions than in one dimension.

A possible explanation for the stronger blow-up behavior in two dimensions comes from the balance between dissipation and nonlinearity. In higher dimensions, the effect of the dissipative term is generally weaker than in one dimension. As a result, the damping mechanism becomes less effective in controlling the nonlinear source term. When the exponent $p$ is chosen large enough so that $E(0)<0$, the nonlinear effects dominate more strongly in two dimensions. This leads to a faster amplification of the energy and a more pronounced blow-up behavior. Compared with the one-dimensional case, the two-dimensional case exhibits a stronger blow-up mechanism. In particular, the pre--blow-up decay is slower in two dimensions, which means that dissipation is less effective in balancing the nonlinear source term. Consequently, once the condition $E(0)<0$ is satisfied, the energy grows more sharply and the onset of blow-up becomes more pronounced.

\begin{description}
\item[\textbf{Acknowledgment}] The first and the second author would like to
acknowledge the support provided by King Fahd University of Petroleum \&
Minerals (KFUPM), Saudi Arabia. While the third author acknowledges the
support provided by University of Sharjah. This work is partially sponsored by
the Interdisciplinary Research Center for Construction \& Building Materials (IRC-CBM).
\end{description}
\begin{description}
\item[\textbf{Availability of data and materials.}] No data were used to support this study.
\end{description}
\begin{description}
\item[\textbf{Competing interests.}] The authors declare no competing interests.
\end{description}

\end{document}